\def\OP#1{\raisebox{-7pt}{$\stackrel{\dis\oplus}{\ssc #1}$}}
\def\cl{\centerline}
\def\a{\alpha}
\def\b{\beta}
\def\vs{\vspace*}
\def\Z{\mathbb{Z}}
\def\C{\mathbb{C}}
\def\OP{\oplus}
\numberwithin{equation}{section}
\newtheorem{theo}{Theorem}[section]
\newtheorem{defi}[theo]{Definition}
\newtheorem{lemm}[theo]{Lemma}
\newtheorem{prop}[theo]{Proposition}
\begin{document}
\begin{center}
{\bf\large Loop $W(a,b)$ Lie conformal algebra}
\footnote {Supported by NSF grant no. 11371278, 11431010.

$^{\,\dag}$ Corresponding author:  H.~Wu.}
\end{center}

\cl{Guangzhe Fan$^{\,\P,\S}$, Henan Wu$^{\,\dag,}$~and Bo Yu$^{\,\P,\ddag}$}

\cl{\small $^{\,\P}$Department of Mathematics, Tongji University, Shanghai 200092, China}

\cl{\small $^{\,\dag}$ School of Mathematical Sciences, Shanxi University, Taiyuan 030006, China}

\cl{\small $^{\,\S}$yzfanguangzhe@126.com}

\cl{\small $^{\,\dag}$wuhenan@sxu.edu.cn}

\cl{\small $^{\,\ddag}$15221506237@163.com }

\vs{8pt}

{\small\footnotesize
\parskip .005 truein
\baselineskip 3pt \lineskip 3pt
\noindent{{\bf Abstract:} Fix $a,b\in\C$, let $LW(a,b)$ be the loop $W(a,b)$ Lie algebra over $\C$ with basis $\{L_{\a,i},I_{\b,j} \mid \a,\b,i,j\in\Z\}$ and relations $[L_{\a,i},L_{\b,j}]=(\a-\b)L_{\a+\b,i+j}, [L_{\a,i},I_{\b,j}]=-(a+b\a+\b)I_{\a+\b,i+j},[I_{\a,i},I_{\b,j}]=0$, where $\a,\b,i,j\in\Z$. In this paper, a formal distribution Lie algebra of $LW(a,b)$ is constructed. Then the associated conformal algebra $CLW(a,b)$ is studied, where $CLW(a,b)$ has a $\C[\partial]$-basis $\{L_i,I_j\,|\,i,j\in\Z\}$ with $\lambda$-brackets $[L_i\, {}_\lambda \, L_j]=(\partial+2\lambda) L_{i+j}, [L_i\, {}_\lambda \, I_j]=(\partial+(1-b)\lambda) I_{i+j}$ and $[I_i\, {}_\lambda \, I_j]=0$. In particular, we determine the conformal derivations and rank one conformal modules of this conformal algebra. Finally, we study the central extensions and extensions of conformal modules. \vs{5pt}

\noindent{\bf Key words:} Lie conformal algebra, $W(a,b)$ algebra, conformal derivations, conformal modules, central extensions
\parskip .001 truein\baselineskip 6pt \lineskip 6pt

\section{Introduction}	
The notion of Lie conformal algebras was introduced by V. G. Kac as a formal language describing the singular part of the operator product expansion in conformal field theory. It is useful to research infinite dimensional Lie algebras satisfying the locality property. Recently, the structure theory and representation theory of some Lie conformal algebras have been extensively studied in \cite{BKV,FK,S,SY1,SY2}. For example, finite irreducible conformal modules over the Virasoro conformal algebra were determined in \cite{CK}. In addition, the conformal derivations and conformal modules of some infinite rank Lie conformal algebras were studied in \cite{FSW,GXY,WCY}. However, there is little about the central extensions and modules extensions of infinite rank Lie conformal algebras. In this paper, we will investigate the structure theory and representation theory of an infinite rank Lie conformal algebra, called loop $W(a,b)$ Lie conformal algebra, such as rank one conformal modules, central extensions and modules extensions. We believe this article would play an energetic role on the study of infinite rank Lie conformal algebras.

It is well known that the Lie algebra $W(a,b)$ is an important infinite dimensional Lie algebra, whose theory plays a crucial role in many areas of mathematics and physics. Fix $a,b\in\C$, then $W(a,b)=\bigoplus_{\a\in\Z}({\C L_{\a}}\bigoplus{\C I_{\a}})$ could be a Lie algebra equipped with the following brackets:
\begin{eqnarray*}
\aligned
&[L_{\a},L_{\b}]=(\a-\b) L_{\a+\b}, \ \ 
&[L_{\a},I_{\b}]=-(a+b\a+\b) I_{\a+\b}, \ \ \ \ 
&[I_{\a},I_{\b}]=0,
\endaligned
\end{eqnarray*}
for any $\a,\b\in\Z$. Obviously, we know that $W(a,b)\simeq W\ltimes I(a,b)$, where $W$ is the well-known centerless Virasoro and $I(a,b)$ is the tensor density module of $W$.  The structure theory and representation theory of $W(a,b)$ were developed in \cite{GJP,GLZ,SXY}. Furthermore, $W(a,b)$ turns out to be some famous Lie algebras for special values of $a, b$. For example, $W(0,0)$ is the well-known twisted Heisenberg-Virasoro algebra whose structure theory and Harish-Chandra modules were investigated in \cite{LJ,SJ}. In addition, $W(0,-1)$ is the W-algebra W(2,2) which was first introduced and studied in \cite{ZD}.

The loop $W(a,b)$ Lie algebra $LW(a,b)$ is defined to be the tensor product of $W(a,b)$ and the Laurent polynomial algebra $\C[t,t^{-1}]$, which is a Lie algebra with basis $\{L_{\a,i},I_{\b,j}\,|\,\a,\,\b,$ $i,j\in\Z\}$ and Lie brackets given by
\begin{eqnarray*}
\aligned
&[L_{\a,i},L_{\b,j}]=(\a-\b) L_{\a+\b,i+j},\\
&[L_{\a,i},I_{\b,j}]=-(a+b\a+\b) I_{\a+\b,i+j},\\
&[I_{\a,i},I_{\b,j}]=0,
\endaligned
\end{eqnarray*}
for any $\a,\b,i,j\in\Z$.
The subalgebra spanned by
$\{L_{\a,i}\,|\,\a,i\in\Z\}$
is actually isomorphic to the centerless loop-Virasoro algebra,  whose structure theory was studied in \cite{TZ}. Furthermore, simple Harish-Chandra modules, intermediate series modules, and Verma modules of the loop-Virasoro algebra were investigated in \cite{GLZ}.

Infinite rank Lie conformal algebras are important ingredients of Lie conformal algebras. In this article, we would like to
study an infinite rank Lie conformal algebra, namely, the loop $W(a,b)$ Lie conformal algebra $CLW(a,b)$ (cf.~\eqref{1.1})~for some $a,b\in\C$.
We construct the Lie conformal algebra $CLW(a,b)$ by $LW(a,b)$ in Section $3$.
As one can see, it is a Lie conformal algebra with $\C[\partial]$-basis $\{L_i,I_j\,|\,i,\,j\in\Z\}$ and $\lambda$-brackets
\begin{eqnarray}\label{1.1}
\aligned
&[L_i\, {}_\lambda \, L_j]=(\partial+2\lambda) L_{i+j}, \ \
[L_i\, {}_\lambda \, I_j]=(\partial+(1-b)\lambda) I_{i+j},\ \
[I_i\, {}_\lambda \, I_j]=0.
\endaligned
\end{eqnarray}
We should mention that $CLW(a,b)$ contains many important conformal subalgebras.
For example, the conformal subalgebra $CVir=\C[\partial]L_{0}$ is isomorphic to the well-known Virasoro conformal algebra and
the conformal subalgebra $CW=\bigoplus_{i\in\Z}\C[\partial]L_i$ is isomorphic to the loop Virasoro Lie conformal algebra studied in \cite{WCY}. Furthermore, we know that $CLW(0,0)$ is the loop Heisenberg-Virasoro Lie conformal algebra which was studied in \cite{FSW}. In addition, \cite{XY} constructed the $W(a,b)$ Lie conformal algebra for some $a,b$ and its conformal module of rank one. Therefore, we can apply some results of these Lie conformal algebras.

Let us briefly describe the structure of the article. In Section $2$, we introduce some basic definitions and previous results of Lie conformal algebras. In Section $3$, on the one hand, we look for the condition such that $F$ is a $\C[\partial]$-module, on the other hand, we construct a formal distribution Lie algebra $(LW(a,b), F)$ for a suitable family $F$ of pairwise local formal distributions. In Sections $4$ and $5$, we study conformal derivations and free nontrivial rank one conformal modules of $CLW(a,b)$. Finally, the central extensions and modules extensions of $CLW(a,b)$ are classified in Sections $6$ and $7$ respectively.

Throughout this paper, we denote by $\C,\,\C^*,\, \Z,\, \Z^{+}$ the sets of complex numbers, nonzero complex numbers, integers, nonnegative integers respectively. We assume that the indices $i,j,k\in \Z$, unless otherwise stated.

\section{Preliminaries}
In this section, we summarize some definitions related to formal distribution Lie algebras and Lie conformal algebras in \cite{DK,K1,K3}.

\begin{prop}\label{202}
Let $g$ be a Lie algebra. If (a(z),b(w)) is a local pair of $g$-valued formal distributions, the Fourier coefficients satisfy the following commutation relation on $g$:
\begin{equation*}
[a_{(m)},b_{(n)}]=\sum_{j\in\Z^{+}} \big(\begin{array}{c}
                                      m \\
                                      j
                                    \end{array}\big)
(a_{(j)}b)_{(m+n-j)},
\end{equation*}
where\begin{equation*}
a_{(j)}b=a(w)_{(j)}b(w)={\rm Res}_z (z-w)^{j}[a(z),b(w)],
\end{equation*}
is called the j-product of $a(w)$ and $b(w)$.
\end{prop}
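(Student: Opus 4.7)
The plan is to exploit the locality of the pair $(a(z),b(w))$ via the decomposition theorem for two-variable formal distributions supported on the diagonal. First I fix $N\in\Z^+$ such that $(z-w)^N[a(z),b(w)]=0$, which is precisely the locality hypothesis. The classical lemma for local formal distributions then asserts that any $g$-valued distribution $c(z,w)$ annihilated by $(z-w)^N$ admits the unique finite expansion
$$c(z,w)=\sum_{j=0}^{N-1} c^j(w)\,\frac{\partial_w^j\delta(z-w)}{j!},$$
with coefficients recovered by $c^j(w)={\rm Res}_z\,(z-w)^j c(z,w)$. Applying this with $c(z,w)=[a(z),b(w)]$ produces
$$[a(z),b(w)]=\sum_{j\in\Z^+}(a_{(j)}b)(w)\,\frac{\partial_w^j\delta(z-w)}{j!},$$
where only finitely many terms survive and $a_{(j)}b$ is exactly the $j$-product in the statement.

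Next I would extract Fourier coefficients. Using the standard expansion $\delta(z-w)=\sum_{k\in\Z}z^{-k-1}w^k$ and termwise differentiation in $w$,
$$\frac{\partial_w^j\delta(z-w)}{j!}=\sum_{k\in\Z}\binom{k}{j}z^{-k-1}w^{k-j}.$$
Writing $(a_{(j)}b)(w)=\sum_{p\in\Z}(a_{(j)}b)_{(p)}w^{-p-1}$ and reading off the coefficient of $z^{-m-1}w^{-n-1}$ on both sides of the previous display: the left-hand side contributes $[a_{(m)},b_{(n)}]$, while on the right the factor $z^{-k-1}$ forces $k=m$, and then matching the $w$-exponent forces $p=m+n-j$. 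Collecting terms yields exactly $\sum_{j\in\Z^+}\binom{m}{j}(a_{(j)}b)_{(m+n-j)}$, as required.

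The main obstacle is the bookkeeping. One must verify that $\binom{m}{j}$ arises with the correct normalisation, since the $1/j!$ attached to $\partial_w^j\delta$ must combine with the falling factorial produced by $\partial_w^j w^k$ to give precisely the binomial coefficient. One must also justify that the finite range $j\le N-1$ appearing in the decomposition lemma can be replaced by the symbolic sum over all $j\in\Z^+$; this is legitimate because $a_{(j)}b={\rm Res}_z(z-w)^j[a(z),b(w)]=0$ whenever $j\ge N$, by locality. The substantive content thus sits entirely in the decomposition lemma for diagonally supported distributions; once that is granted, the proposition reduces to a coefficient comparison combined with a binomial index shift.
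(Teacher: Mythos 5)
Your argument is correct and is essentially the standard proof: the paper states this proposition without proof as a preliminary imported from its references, and the argument given there is exactly your route --- the decomposition theorem for distributions killed by $(z-w)^N$, with coefficients $a_{(j)}b={\rm Res}_z(z-w)^j[a(z),b(w)]$, followed by extraction of the coefficient of $z^{-m-1}w^{-n-1}$ using $\partial_w^j\delta(z-w)/j!=\sum_k\binom{k}{j}z^{-k-1}w^{k-j}$. Your bookkeeping (the forcing $k=m$, $p=m+n-j$, and the vanishing of $a_{(j)}b$ for $j\ge N$ that makes the sum over $j\in\Z^+$ finite) is accurate, so there is nothing to add.
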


\begin{defi}\label{203}\rm
Let $g$ be a Lie algebra. The $\lambda$-bracket of two $g$-valued formal distributions is defined by the $\C$-bilinear map
\begin{equation*}\label{204}
[\cdot_{\lambda}\cdot]:g[[w,w^{-1}]]\otimes g[[w,w^{-1}]]\longrightarrow g[[w,w^{-1}]][[\lambda]]
\end{equation*}
with
\begin{equation}\label{205}
[a(w)\,{}_\lambda\, b(w)]=F^\lambda_{z,w}[a(z),b(w)],
\end{equation}
where $F^\lambda_{z,w}a(z,w)={\rm Res}_z e^{\lambda(z-w)}a(z,w)$ is the formal Fourier transform.
\end{defi}

Furthermore, one readily shows that the $\lambda$-bracket is related to the $j$-product as follows:
\begin{equation}\label{206}
[a\,{}_\lambda\, b]=\sum_{j\in\Z^{+}}\frac{\lambda^j}{j!}(a_{(j)}b).
\end{equation}

\begin{defi}\label{250}\rm
A Lie conformal algebra is a $\C[\partial]$-module $A$ endowed with a linear map $A\otimes A\rightarrow A[\lambda]$, $a\otimes b\rightarrow [a{}\, _\lambda \, b]$, called $\lambda$-bracket, where $\lambda$ is an indeterminate and $A[\lambda]=\C[\lambda]\otimes A$, subject to the following three axioms:
\begin{equation}\label{251}
\aligned
&Conformal~~sesquilinearity:~~~~[\partial a\,{}_\lambda \,b]=-\lambda[a\,{}_\lambda\, b],\ \ \ \
[a\,{}_\lambda \,\partial b]=(\partial+\lambda)[a\,{}_\lambda\, b];\\
&Skew~~symmetry:~~~~[a\, {}_\lambda\, b]=-[b\,{}_{-\lambda-\partial}\,a];\\
&Jacobi~~identity:~~~~[a\,{}_\lambda\,[b\,{}_\mu\, c]]=[[a\,{}_\lambda\, b]\,{}_{\lambda+\mu}\, c]+[b\,{}_\mu\,[a\,{}_\lambda \,c]].
\endaligned
\end{equation}
\end{defi}

\begin{defi}\label{252}\rm
A conformal module $M$ over a Lie conformal algebra $A$ is a $\C[\partial]$-module endowed with a $\lambda$-action $A\otimes M\rightarrow M[\lambda]$ such that
\begin{equation*}
\aligned
&(\partial a)\,{}_\lambda\, v=-\lambda a\,{}_\lambda\, v,\ \ \ \ \ a{}\,{}_\lambda\, (\partial v)=(\partial+\lambda)a\,{}_\lambda\, v;\\
&a\,{}_\lambda\, (b{}\,_\mu\, v)-b\,{}_\mu\,(a\,{}_\lambda\, v)=[a\,{}_\lambda\, b]\,{}_{\lambda+\mu}\, v.
\endaligned
\end{equation*}
Furthermore, if $M$ is a free module of rank one over $A$, we call $M$ a rank one conformal module over $A$.
\end{defi}
\begin{defi}\rm
A Lie conformal algebra $A$ is {\it $\Z$-graded} if $A=\oplus_{i\in \Z}A_i$, where each $A_i$ is a $\C[\partial]$-submodule
and $[A_i\,{}_\lambda\, A_j]\subset A_{i+j}[\lambda]$ for any $i,j\in \Z$.
\end{defi}

\begin{defi}\rm
Let $V$ and $W$ be two $\C[\partial]$-modules. A conformal linear map from $V$ to $W$ is a $\C$-linear map $\phi_\lambda:V\longrightarrow \C[\partial][\lambda]\otimes_{\C[\partial]}W$ such that
\begin{equation*}\label{253}
\phi_\lambda(\partial v)=(\partial+\lambda)\phi_\lambda(v),\ \ \mbox{ \ for $v\in V$.}
\end{equation*}
\end{defi}
Denote the space of conformal linear maps between $\C[\partial]$-modules $V$ and $W$ by $Chom(V,W)$.

\begin{defi}\rm
Let $A$ be a Lie conformal algebra. A conformal linear map $D_\lambda:A\longrightarrow A[\lambda]$ is called a conformal derivation if
\begin{equation*}\label{254}
D_\lambda([a\,{}_\mu \,b])=[(D_\lambda a)\,{}_{\lambda+\mu} \,b]+[a\,{}_\mu \,(D_\lambda b)],\ \ \mbox{ \ for all $a,b,c\in A$.}
\end{equation*}
\end{defi}
It can be easily verified that for any $x\in A$, the map ${\rm ad}_x$, defined by $({\rm ad}_x)_\lambda y= [x\, {}_\lambda\, y]$ for $y\in A$, is a conformal derivation of $A$. All conformal derivations of this kind are called conformal inner derivations.
Denote by ${\rm CDer\,}(A)$
and ${\rm CInn\,}(A)$ the vector spaces of all conformal derivations and conformal inner derivations of $A$ respectively.

\section{The Lie conformal algebra $CLW(a,b)$}
In this section, we would like to start with the Lie algebra $LW(a,b)$ to construct the Lie conformal algebra $CLW(a,b)$ via formal distribution Lie algebra.
Let $F$ be a vector space spanned by $\{L_i(z),\,I_j(z)\,|\,i,j\in\Z\}$, where $L_i(z)=\sum_{\a\in\Z} L_{\a,i}z^{-\a-2}$ and  $I_j(z)=\sum_{\b\in\Z} I_{\b,j}z^{-\b-x}$ for any $i,j\in\Z$ and some $x\in\Z$.

According to the result¡¡of the $W(a,b)$ Lie conformal algebra studied in \cite{XY}, we can get Proposition~3.1 similarly.
\begin{prop}\label{3.1}
$F$ is a $\C[\partial]$-module if and only if $x=a-b+1$.
\end{prop}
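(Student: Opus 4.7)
\medskip
\noindent\textbf{Proof plan.} The plan is to compute the commutator $[L_i(z),I_j(w)]$ in $LW(a,b)[[z^{\pm 1},w^{\pm 1}]]$, write it in the canonical local form $\sum_{k\ge 0}(L_i)_{(k)}I_j(w)\,\partial_w^k\de(z-w)/k!$, and then read off the $j$-products. For $F$ to acquire a $\C[\partial]$-module structure compatible with the formal distribution Lie algebra (with $\partial$ acting as $\partial_z$), every $j$-product of generators of $F$ must land in the $\C[\partial_w]$-hull of $F$. Since the $L$-$L$ commutator is the usual loop-Virasoro one and reproduces $(\partial+2\la)L_{i+j}$ without any constraint on $x$, and $[I_i(z),I_j(w)]=0$ trivially, the only possible obstruction is the mixed bracket.

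For the main computation I would first expand
\begin{equation*}
[L_i(z),I_j(w)]=-\sum_{\a,\b}(a+b\a+\b)I_{\a+\b,i+j}z^{-\a-2}w^{-\b-x},
\end{equation*}
reindex with $m=\a+\b$, and pull $I_{m,i+j}w^{-m-x}$ out of the inner sum to isolate $\sum_\a(a+(b-1)\a+m)z^{-\a-2}w^\a$. Using the two identities $\sum_\a z^{-\a-2}w^\a=w^{-1}\de(z-w)$ and $\sum_\a \a z^{-\a-2}w^\a=\partial_w\de(z-w)-w^{-1}\de(z-w)$, both read off from $\de(z-w)=\sum_n z^{-n-1}w^n$, this inner series collapses to $(a-b+1+m)w^{-1}\de(z-w)+(b-1)\partial_w\de(z-w)$. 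Summing on $m$ and comparing with $\partial_w I_{i+j}(w)=-\sum_m(m+x)I_{m,i+j}w^{-m-x-1}$ gives
\begin{equation*}
(L_i)_{(0)}I_j=\partial_w I_{i+j}+(x-a+b-1)\,w^{-1}I_{i+j},\qquad (L_i)_{(1)}I_j=(1-b)I_{i+j},
\end{equation*}
with all higher $j$-products vanishing because $(z-w)^{2}\partial_w\de(z-w)=0$.

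The proposition then drops out. If $x=a-b+1$, the anomalous term vanishes, both $j$-products lie in $\C[\partial_w]F$, and the $\la$-bracket is exactly $(\partial+(1-b)\la)I_{i+j}$ as in~\eqref{1.1}, so $F$ is a $\C[\partial]$-module. Conversely, if $x\ne a-b+1$, then $w^{-1}I_{i+j}(w)$ would have to belong to the $\C[\partial_w]$-span of the generators, which is impossible: the coefficient of $I_{m,i+j}$ in $\partial_w^n I_{i+j}(w)$ is the polynomial $(-1)^n\prod_{s=0}^{n-1}(m+x+s)$ in the mode index $m$, whereas in $w^{-1}I_{i+j}(w)$ that coefficient is constant in $m$, so no finite $\C$-combination can match it.

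The technical heart of the argument is the $\de$-function identity $\sum_\a \a z^{-\a-2}w^\a=\partial_w\de(z-w)-w^{-1}\de(z-w)$: once this is in place, the combination $a-b+1=-(b-1)+a+1$ appears automatically in the coefficient of $w^{-1}\de$, and the rest is residue bookkeeping. The only subtlety is to make the ``only if'' direction rigorous by the brief linear independence check on modes of $I_{i+j}(w)$ indicated above.
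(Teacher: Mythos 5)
Your proposal is correct, and it is the standard formal-distribution computation: the paper itself offers no proof of Proposition \ref{3.1}, deferring to the reference on the $W(a,b)$ Lie conformal algebra, and what you have written is precisely that calculation adapted to the loop setting (where the extra indices $i,j$ are inert). Both the key identity $(L_i)_{(0)}I_j=\partial_w I_{i+j}+(x-a+b-1)\,w^{-1}I_{i+j}$ and the mode-by-mode linear-independence check that makes the ``only if'' direction rigorous are sound.
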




By Proposition \ref{3.1}, we can obtain that if and only if $a-b\in\Z$, there exists the Lie conformal algebra $CLW(a,b)$. Moreover, we have propositions as follows.

\begin{prop}\label{3.2}If $a-b\in\Z$, then we have $I_j(z)=\sum_{\b\in\Z} I_{\b,j}z^{-\b-a+b-1}$. Also, we have
\begin{eqnarray*}&&
[L_i(z),L_j(w)]=(\partial_w L_{i+j}(w))\delta(z,w)+2L_{i+j}(w)\partial_w\delta(z,w),
\\&&[L_i(z),I_j(w)]=(\partial_w I_{i+j}(w))\delta(z,w)+(1-b)I_{i+j}(w)\partial_w\delta(z,w),
\\&&[I_i(z),I_j(w)]=0.
\end{eqnarray*}
\end{prop}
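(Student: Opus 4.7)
The plan is to dispatch the two assertions of the proposition in turn. The first, namely $I_j(z)=\sum_{\beta\in\Z}I_{\beta,j}z^{-\beta-a+b-1}$, is immediate from Proposition~\ref{3.1}: since $F$ is required to be a $\C[\partial]$-module, the exponent $x$ appearing in the definition of $I_j(z)$ must equal $a-b+1$, and the assumption $a-b\in\Z$ guarantees this is an integer so that the defining sum makes sense.

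For the three bracket identities I will compute each commutator of formal distributions directly from the defining relations of $LW(a,b)$, and then rewrite the resulting double sums in terms of $\delta(z,w)=\sum_{n\in\Z}z^n w^{-n-1}$ and $\partial_w\delta(z,w)=\sum_{n\in\Z}(-n-1)z^n w^{-n-2}$. The main tool is the change of variables $\gamma=\alpha+\beta$, $n=-\alpha-2$ (equivalently $\alpha=-n-2$, $\beta=\gamma+n+2$), which puts both sides on the monomial basis $\{z^n w^{-\gamma-n-p-2}\}$ for whichever exponent $p$ governs the target distribution, allowing a coefficient-by-coefficient match.

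Concretely, for $[L_i(z),L_j(w)]$ I substitute $[L_{\alpha,i},L_{\beta,j}]=(\alpha-\beta)L_{\alpha+\beta,i+j}$; after the above change of variables the coefficient of $z^n w^{-\gamma-n-4}$ becomes $-(\gamma+2n+4)L_{\gamma,i+j}$, which is precisely what $(\partial_w L_{i+j}(w))\delta(z,w)+2L_{i+j}(w)\partial_w\delta(z,w)$ produces. For $[L_i(z),I_j(w)]$ the same substitution turns the coefficient $-(a+b\alpha+\beta)$ into $-\gamma-(1-b)n+2b-a-2$; on the other side, expanding $(\partial_w I_{i+j}(w))\delta(z,w)+(1-b)I_{i+j}(w)\partial_w\delta(z,w)$ with exponent $x=a-b+1$ yields exactly $-\gamma-(1-b)n-a+2b-2$. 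The agreement hinges precisely on the identity $x=a-b+1$, which is the content of Proposition~\ref{3.1}. Finally, $[I_i(z),I_j(w)]=0$ is trivial since $[I_{\alpha,i},I_{\beta,j}]=0$ termwise.

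The computation is essentially routine index bookkeeping. The only subtle point is the $[L_i,I_j]$ case: the constant term $-a$ in the structure constants has to be absorbed into the shift of $I_{i+j}(w)$ via the specific exponent $x=a-b+1$, and that this reconciliation works is exactly what Proposition~\ref{3.1} guarantees. Consequently no genuine obstacle arises beyond careful tracking of the two shift parameters.
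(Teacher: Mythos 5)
Your computation is correct, and it is exactly the routine expansion (change of variables $\gamma=\alpha+\beta$, $n=-\alpha-2$, match coefficients against $\delta(z,w)$ and $\partial_w\delta(z,w)$, with the $[L_i,I_j]$ case closing up precisely because $x=a-b+1$) that the paper leaves implicit by citing the analogous calculation for the $W(a,b)$ conformal algebra in the reference of Xu--Yue. Since the paper supplies no written proof, your direct verification is the intended argument and fills that gap faithfully.
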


\begin{prop}\label{3.3} If $a-b\in\Z$, then we have
\begin{equation}\label{3.4}
\aligned
&[L_i\,{}_\lambda\, L_j]=(\partial+2\lambda)L_{i+j},
\ \ 
[L_i\,{}_\lambda\, I_j]=(\partial+(1-b)\lambda)I_{i+j},
\ \ 
[I_i\,{}_\lambda\, I_j]=0.
\endaligned
\end{equation}
\end{prop}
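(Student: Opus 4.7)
The plan is to obtain the $\lambda$-brackets in \eqref{3.4} by applying the formal Fourier transform $F^\lambda_{z,w}$ of Definition~\ref{203} to each of the three formal-distribution commutators furnished by Proposition~\ref{3.2}, since by \eqref{205} this transform is precisely the bridge from $[X(z),Y(w)]$ to $[X\,{}_\lambda\,Y]$. Because those commutators are already written as finite sums of terms of the shape $A(w)\delta(z,w)$ and $A(w)\partial_w\delta(z,w)$, the whole computation collapses to knowing the Fourier transform of these two standard building blocks.

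The two identities I would use are $F^\lambda_{z,w}\bigl(A(w)\delta(z,w)\bigr)=A(w)$ and $F^\lambda_{z,w}\bigl(A(w)\partial_w\delta(z,w)\bigr)=\lambda A(w)$. The first is immediate from ${\rm Res}_z\,e^{\lambda(z-w)}\delta(z,w)=1$, and the second follows from $\partial_w\delta(z,w)=-\partial_z\delta(z,w)$ together with the formal integration-by-parts identity ${\rm Res}_z\,\partial_z(\cdot)=0$, giving ${\rm Res}_z\,e^{\lambda(z-w)}\partial_w\delta(z,w)=\lambda$. Under the $\C[\partial]$-module structure on $F$, in which $\partial_w$ acts as $\partial$, applying $F^\lambda_{z,w}$ to the first relation of Proposition~\ref{3.2} produces $\partial L_{i+j}+2\lambda L_{i+j}=(\partial+2\lambda)L_{i+j}$, the second relation produces $\partial I_{i+j}+(1-b)\lambda I_{i+j}=(\partial+(1-b)\lambda)I_{i+j}$, and the third relation transforms trivially to zero.

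There is no real obstacle here, since the substantive work already sits in Proposition~\ref{3.2}; this proposition is essentially its repackaging in $\lambda$-bracket language. An equivalent route would be to extract the $j$-products directly via Proposition~\ref{202}, obtaining $(L_i)_{(0)}L_j=\partial L_{i+j}$, $(L_i)_{(1)}L_j=2L_{i+j}$, together with the analogous $j$-products for the $L$--$I$ pair, and all higher $j$-products vanishing by the degree of $\delta$ that appears in Proposition~\ref{3.2}; equation \eqref{206} then assembles these as the polynomial in $\lambda$ stated in \eqref{3.4}.
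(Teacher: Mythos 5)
Your proposal is correct and follows exactly the route the paper intends: the paper states Proposition~3.3 without proof as the formal Fourier transform of the commutators in Proposition~3.2 (citing the analogous computation for the $W(a,b)$ Lie conformal algebra in \cite{XY}), and your two identities $F^\lambda_{z,w}\bigl(A(w)\delta(z,w)\bigr)=A(w)$ and $F^\lambda_{z,w}\bigl(A(w)\partial_w\delta(z,w)\bigr)=\lambda A(w)$ are precisely what turns those relations into \eqref{3.4}. Your alternative reading via the $j$-products and \eqref{206} is the same computation in equivalent language, so there is nothing to add.
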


\begin{prop}\label{3.5} Let $CLW(a,b)$ be a free $\C[\partial]$-module with $\C[\partial]$-basis $\{L_i, I_i \mid i,j\in\Z\}$ and $a-b\in\Z$. Then $CLW(a,b)$ is a Lie conformal algebra with $\lambda$-brackets defined as in Proposition~3.3.
\end{prop}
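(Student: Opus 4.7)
The plan is to deduce Proposition 3.5 from the preceding propositions by invoking the general principle that a family of pairwise local formal distributions, closed under $\partial$ and under all $j$-products, gives rise to a Lie conformal algebra on its $\C[\partial]$-span (this is the standard Kac construction for formal distribution Lie algebras, cf.~\cite{K1,K3}). Propositions~3.1--3.3 have already done most of the work: Proposition~3.1 ensures $F$ is a $\C[\partial]$-module (under $a-b\in\Z$), Proposition~3.2 shows that the pairs $(L_i(z),L_j(w))$, $(L_i(z),I_j(w))$, $(I_i(z),I_j(w))$ are local and computes their commutators, and Proposition~3.3 translates these commutators into the claimed $\lambda$-brackets via \eqref{206}. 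So the task remaining for Proposition~3.5 is twofold: \textbf{(a)} establish that the stated set is a $\C[\partial]$-basis, i.e.\ that $CLW(a,b)$ is free of the stated rank, and \textbf{(b)} verify the three Lie conformal algebra axioms in \eqref{251} for the brackets \eqref{3.4}.

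For step (a), I would argue that the formal distributions $\{L_i(z),I_j(z)\mid i,j\in\Z\}$ are $\C[\partial]$-linearly independent inside $LW(a,b)[[z,z^{-1}]]$: a nontrivial $\C[\partial]$-relation would yield, upon extracting coefficients of $z^{-\a-2}$ and $z^{-\b-a+b-1}$, a linear dependence among the basis vectors $L_{\a,i},I_{\b,j}$ of $LW(a,b)$, which is impossible. Hence the $\C[\partial]$-span $CLW(a,b)$ is free on $\{L_i,I_j\}$.

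For step (b), I would verify each axiom directly from \eqref{3.4}. Conformal sesquilinearity follows by extending the brackets $\C$-bilinearly via $[\partial a\,{}_\lambda\,b]=-\lambda[a\,{}_\lambda\,b]$ and $[a\,{}_\lambda\,\partial b]=(\partial+\lambda)[a\,{}_\lambda\,b]$; this is simply how the $\lambda$-bracket is extended from the generating set. Skew-symmetry is a short check on each of the three bracket types: for the $LL$-bracket, $-[L_j\,{}_{-\la-\partial}\,L_i]=-(\partial+2(-\la-\partial))L_{i+j}=(\partial+2\la)L_{i+j}$ after one uses $\partial L_{i+j}$ on the right gets replaced by $-\la-\partial$ acting on the left; analogously for $LI$ with coefficient $(1-b)$, and trivially for $II$.

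The remaining, and only genuinely substantive, step is the Jacobi identity, which must be checked on the three generator triples $(L,L,L),(L,L,I),(L,I,I)$ (the triples containing two or more $I$'s are immediate from $[I\,{}_\la\,I]=0$). Rather than grind through these, I would simply observe that they follow automatically: the brackets in \eqref{3.4} are the $\lambda$-brackets of formal distributions in the Lie algebra $LW(a,b)$, and for any Lie algebra $\mathfrak g$ the $\lambda$-bracket of $\mathfrak g$-valued formal distributions satisfies the Jacobi identity of \eqref{251} whenever all pairs involved are local --- this is a standard consequence of the Jacobi identity in $\mathfrak g$ together with the properties of the Fourier transform $F^\la_{z,w}$ (see \cite{K1,K3}). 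The pairwise locality of $\{L_i(z),I_j(z)\}$ is exactly the content of Proposition~3.2. The main (and essentially only) obstacle, if one insisted on a self-contained direct verification, would be the $(L,L,I)$ Jacobi identity, where the mixing of the Virasoro-type coefficient $2$ and the intertwining coefficient $(1-b)$ must conspire correctly; but this is forced by the consistency of the underlying Lie bracket in $LW(a,b)$.
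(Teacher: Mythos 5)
Your proposal is correct and follows essentially the same route the paper takes: Section~3 of the paper constructs $CLW(a,b)$ precisely as the Lie conformal algebra associated to the formal distribution Lie algebra $(LW(a,b),F)$, so the axioms \eqref{251} (in particular the Jacobi identity) are inherited from the Lie brackets of $LW(a,b)$ via the locality computed in Proposition~3.2 and the $j$-product/$\lambda$-bracket dictionary \eqref{206}, exactly as you argue. Your supplementary direct checks (freeness of the $\C[\partial]$-basis, skew-symmetry, and the $(L,L,I)$ Jacobi identity where $(\partial+\lambda+(1-b)\mu)(\partial+(1-b)\lambda)-(\partial+\mu+(1-b)\lambda)(\partial+(1-b)\mu)=(\lambda-\mu)(\partial+(1-b)(\lambda+\mu))$) are all consistent and fill in details the paper omits.
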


For convenience, we simply denote $R(b)=CLW(a,b)$. Note that $R(b)$ is a $\Z$-graded Lie conformal algebra in the sense $R(b)=\oplus_{i\in\Z} \ (R(b))_{i}$, where
$(R(b))_{i}=\C[\partial]{L_i}\oplus \C[\partial]{I_i}$. Obviously, $R(0)$ is the loop Heisenberg-Virasoro Lie conformal algebra studied in \cite{FSW}.

\section{Conformal derivations of $CLW(a,b)$}
Suppose $D\in {\rm CDer\,} (R(b))$. Define $D^i(L_j)=\pi_{i+j} D(L_j),\,D^i(I_j)=\pi_{i+j} D(I_j)$ for any $j\in\Z$, where in general $\pi_{i}$ is the natural projection from $$\C[\lambda]\otimes R(b)\cong \OP_{j\in\Z}\C[\partial,\lambda]L_j\oplus\OP_{j\in\Z}\C[\partial,\lambda]I_j,$$ onto $\C[\partial,\lambda]{L_{i}}\oplus\C[\partial,\lambda]{I_{i}}$.
Then $D^i$ is a conformal derivation and $D=\sum_{i\in\Z} D^i$ in the sense that for any $x\in R(b)$ only finitely many $D^i_\lambda(x)\neq0$.
Let ${({\rm CDer\,}(R(b)))}^c$ be the space of conformal derivations of degree $c$, i.e.,
 $${({\rm CDer\,}(R(b)))}^c=\{D\in {\rm CDer\,}(R(b))\,|\, D_\lambda((R(b))_i)\subset (R(b))_{i+c}[\lambda]\}.$$

Firstly, by \cite{FSW}, we have the following theorem.
\begin{theo}
${\rm CDer\,}(R(0))={\rm CInn\,}(R(0))\oplus \C^\infty$.
\end{theo}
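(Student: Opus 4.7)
The plan is mainly to cite \cite{FSW}, since $R(0)=CLW(0,0)$ is by definition the loop Heisenberg--Virasoro Lie conformal algebra treated there, and its conformal derivation algebra is exactly the content of the main theorem of that paper. So operationally I would write one line invoking \cite{FSW}. However, to make the present article self-contained it is worth sketching the argument; I describe what I would write if I had to reprove it.

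First I would use the $\Z$-grading on $R(0)$ to decompose an arbitrary $D\in\mathrm{CDer}(R(0))$ as $D=\sum_{c\in\Z}D^c$ with $D^c$ of degree $c$, noting that for each $x\in R(0)$ only finitely many $D^c_\lambda(x)$ are nonzero (as already set up in the paragraph preceding the theorem). For each $c$ I would parametrize
\begin{equation*}
D^c_\lambda(L_j)=f_{c,j}(\partial,\lambda)L_{j+c}+g_{c,j}(\partial,\lambda)I_{j+c},\qquad D^c_\lambda(I_j)=h_{c,j}(\partial,\lambda)L_{j+c}+k_{c,j}(\partial,\lambda)I_{j+c},
\end{equation*}
with polynomial coefficients, and then plug these expressions into the derivation identity for each of the three $\lambda$-brackets of $R(0)$, namely $[L_i\,{}_\mu\,L_j]=(\partial+2\mu)L_{i+j}$, $[L_i\,{}_\mu\,I_j]=(\partial+\mu)I_{i+j}$, and $[I_i\,{}_\mu\,I_j]=0$.

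The third bracket, via $D^c_\lambda([I_i\,{}_\mu\,I_j])=0$, immediately forces $h_{c,j}\equiv 0$ (the abelian ideal $\bigoplus_j\C[\partial]I_j$ is preserved). The first bracket reduces $f_{c,j}$ to a standard Virasoro-type shape, and the second couples $g_{c,j}$ and $k_{c,j}$ to $f_{c,j}$. At this point I would subtract an inner derivation $\mathrm{ad}_{x_c}$ with $x_c\in(R(0))_c$ chosen so as to kill the generic part of $f_{c,j},g_{c,j},k_{c,j}$; the Virasoro-subalgebra analysis of \cite{CK} together with the loop-Virasoro analysis of \cite{WCY} shows that for $c\neq 0$ the whole $D^c$ becomes inner after this subtraction, while for $c=0$ a residual family remains, corresponding to the maps $L_j\mapsto 0$, $I_j\mapsto \gamma_j I_j$ for arbitrary scalars $\gamma_j\in\C$ (one checks from the $[L_i\,{}_\mu\,I_j]$ identity that no $\partial,\lambda$-dependence survives and that the $\gamma_j$ are unconstrained). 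The resulting $\Z$-indexed family of complex scalars is precisely the factor $\C^\infty$, and it is clearly disjoint from $\mathrm{CInn}(R(0))$ since an inner derivation sends only finitely many $I_j$ into nonzero multiples of $I_j$.

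The main obstacle, and the reason this is a nontrivial theorem rather than a one-line consequence, is the absorption bookkeeping at degree $c=0$: one must show that after subtracting every available inner derivation the space of leftover cocycles is exactly $\C^\infty$, neither larger (which would require additional outer directions) nor smaller (which would require some $\gamma_j$ to be coupled). All the other steps are essentially polynomial identity chasing in $\partial,\lambda,\mu$, which is routine once the parametrization is fixed.
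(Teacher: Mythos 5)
Your primary move --- quoting \cite{FSW}, since $R(0)=CLW(0,0)$ is by definition the loop Heisenberg--Virasoro Lie conformal algebra --- is exactly what the paper does; Theorem 4.1 is stated there with no proof beyond that citation, so on that level you match.

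However, the sketch you append contains a concrete error in identifying the outer part, and since that identification is the whole content of the theorem it is worth correcting. You claim the residual non-inner derivations at degree $c=0$ are the maps $L_j\mapsto 0$, $I_j\mapsto\gamma_j I_j$. Such a map is not a conformal derivation of $R(0)$: applying the derivation identity to $[L_i\,{}_\mu\,I_j]=(\partial+\mu)I_{i+j}$ gives on the left $D_\lambda\bigl((\partial+\mu)I_{i+j}\bigr)=(\partial+\lambda+\mu)\gamma_{i+j}I_{i+j}$ and on the right $\gamma_j(\partial+\mu)I_{i+j}$, forcing $\gamma_{i+j}\lambda=0$, i.e.\ all $\gamma$'s vanish. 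The actual space $\C^\infty$, as defined immediately after the theorem in the paper, goes in the opposite direction: $D_{\vec a}\,{}_\lambda(L_i)=\sum_c a_cI_{i+c}$ and $D_{\vec a}\,{}_\lambda(I_i)=0$, i.e.\ the outer derivations shift $L$ into $I$ and kill $I$; one checks directly that these do satisfy the derivation identity (the key computation is $[I_{i+c}\,{}_{\lambda+\mu}\,L_j]=(\lambda+\mu)I_{i+j+c}$ by skew symmetry). Moreover these outer derivations occur in every degree $c$, not only in degree $0$: the degree-$c$ component of $D_{\vec a}$ is $L_i\mapsto a_cI_{i+c}$, so your claim that all $D^c$ with $c\neq0$ become inner is also not what happens. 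Your ancillary remark that an inner derivation sends only finitely many $I_j$ to nonzero multiples of $I_j$ is likewise false ($({\rm ad}_{L_k})_\lambda I_j=(\partial+\lambda)I_{k+j}\neq0$ for every $j$); the correct reason $\C^\infty\cap{\rm CInn}(R(0))=0$ is the degree count in $\partial,\lambda$: an inner derivation ${\rm ad}_x$ with $x=\sum(p_k(\partial)L_k+q_k(\partial)I_k)$ sends $L_i$ to terms whose $I$-components are $q_k(-\lambda)(\partial+\lambda)I_{i+k}$, never the constants $a_cI_{i+c}$. If you rely solely on the citation the proof stands, but as written your sketch would lead to the wrong answer.
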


For readers' convenience, the definition of $\C^\infty$ is given. Denote
$$\C^\infty=\{\vec{a}=(a_c)_{c\in\Z}\,|\,a_c\in\C\mbox{ and }a_c=0\ \mbox{for\ all\ but\ finitely\ many}\ c's\}.$$
For each $\vec{a}\in \C^\infty$, we define $D_{\vec{a}}\, {}_\lambda\, (L_i)=\sum a_c I_{i+c}$ and $ D_{\vec{a}}\, {}_\lambda\,(I_i)=0$ for all $i\in\Z$.
Then $D_{\vec{a}}\in {\rm CDer\,}(R(0))$. It  can be easily verified that $D_{\vec{a}}\in {\rm CInn\,}(R(0))$ implies $D_{\vec{a}}=0$.
For simplicity, we denote by $\C^\infty$ the space of such conformal derivations.

Methods similar to those in the article \cite{FSW} are used to discuss nonzero parameter $b$. As a result, we obtain the following theorem.
\begin{theo}If $b\in\C^*$, we have
${\rm CDer\,}(R(b))={\rm CInn\,}(R(b))$.
\end{theo}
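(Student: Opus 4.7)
The plan is to follow the approach of \cite{FSW}, decomposing any $D\in{\rm CDer\,}(R(b))$ as $D=\sum_{c\in\Z} D^c$ using the $\Z$-grading (as already set up in the preamble to Theorem~4.1), and then showing that every homogeneous conformal derivation $D^c$ of degree $c$ is inner. Fix $c\in\Z$ and write
\begin{equation*}
D^c_\lambda(L_j)=f_j(\partial,\lambda)L_{j+c}+g_j(\partial,\lambda)I_{j+c},\qquad D^c_\lambda(I_j)=h_j(\partial,\lambda)L_{j+c}+p_j(\partial,\lambda)I_{j+c},
\end{equation*}
with $f:=f_0$ and $g:=g_0$. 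The aim is to pin down these four families of polynomials from the derivation axioms applied to brackets involving $L_0$, and then exhibit a single element $x\in R(b)$ with $D^c={\rm ad\,}x$.

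Applying the derivation axiom to $[L_0\,{}_\mu\, L_j]=(\partial+2\mu)L_j$, equating coefficients of $L_{j+c}$ and $I_{j+c}$, and specializing to $\mu=0$ yields the two functional equations $\lambda f_j(\partial,\lambda)=f(-\lambda,\lambda)(\partial+2\lambda)$ and $\lambda g_j(\partial,\lambda)=g(-\lambda,\lambda)(-b\partial+(1-b)\lambda)$. The first forces $f(\partial,\lambda)=\varphi(\lambda)(\partial+2\lambda)$ for some $\varphi\in\C[\lambda]$ and $f_j=f$ for all $j$. The second equation is where the hypothesis $b\in\C^*$ enters crucially: when $b\neq 0$ the polynomial $-b\partial+(1-b)\lambda$ is not divisible by $\lambda$ in $\C[\partial,\lambda]$, so we must have $\lambda\mid g(-\lambda,\lambda)$, giving $g(\partial,\lambda)=\psi(\lambda)(-b\partial+(1-b)\lambda)$ for some $\psi\in\C[\lambda]$ and $g_j=g$ for all $j$. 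When $b=0$ this factor degenerates to $\lambda$ and no divisibility is forced; the extra freedom in $g$ is precisely what accounts for the additional summand $\C^\infty$ appearing in Theorem~4.1.

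Next, applying the derivation axiom to $[L_0\,{}_\mu\, I_j]=(\partial+(1-b)\mu)I_j$ at $\mu=0$ gives $\lambda h_j(\partial,\lambda)=0$, hence $h_j=0$, and $\lambda p_j(\partial,\lambda)=f(-\lambda,\lambda)(\partial+(1-b)\lambda)=\lambda\varphi(\lambda)(\partial+(1-b)\lambda)$, so $p_j(\partial,\lambda)=\varphi(\lambda)(\partial+(1-b)\lambda)$, again independent of $j$. Setting $x=\varphi(-\partial)L_c+\psi(-\partial)I_c\in R(b)$ and using conformal sesquilinearity together with the brackets of Proposition~3.3 and the skew-symmetric identity $[I_c\,{}_\lambda\, L_j]=(-b\partial+(1-b)\lambda)I_{j+c}$, a direct computation shows that $({\rm ad\,}x)_\lambda$ coincides with $D^c_\lambda$ on every $L_j$ and every $I_j$. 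Because a conformal derivation is determined by its values on the $\C[\partial]$-basis $\{L_j,I_j\}_{j\in\Z}$, this yields $D^c={\rm ad\,}x\in{\rm CInn\,}(R(b))$, completing the proof.

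The main technical obstacle is the divisibility step in the middle paragraph, which cleanly isolates the role of $b\neq 0$; everything else is bookkeeping with the derivation axioms. In particular, the remaining derivation relations involving $[L_i\,{}_\mu\, L_j]$ and $[L_i\,{}_\mu\, I_j]$ for $i\neq 0$, and $[I_i\,{}_\mu\, I_j]=0$, need not be checked independently, because ${\rm ad\,}x$ is manifestly a conformal derivation and we have matched $D^c$ with it on the generators.
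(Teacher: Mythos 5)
Your proposal is correct and is exactly the argument the paper has in mind: the paper omits the proof and defers to the method of \cite{FSW}, which is precisely your scheme of restricting to homogeneous components, extracting the functional equations from brackets with $L_0$ at $\mu=0$, using the divisibility of $g(-\lambda,\lambda)$ by $\lambda$ forced by $\lambda\nmid(-b\partial+(1-b)\lambda)$ when $b\neq0$, and matching against ${\rm ad\,}(\varphi(-\partial)L_c+\psi(-\partial)I_c)$ on the $\C[\partial]$-basis. The only point you leave implicit is that $D=\sum_c D^c$ is then a \emph{finite} sum of inner derivations: this follows at once because each $x_c$ is read off from $D^c_\lambda(L_0)=\pi_c D_\lambda(L_0)$, which vanishes for all but finitely many $c$, so $D={\rm ad\,}\bigl(\sum_c x_c\bigr)$ is indeed inner.
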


Now we get the main result of this section.
\begin{theo}
${\rm CDer\,}(R(b))={\rm CInn\,}(R(b))\oplus \delta_{b,0}\C^\infty$.
\end{theo}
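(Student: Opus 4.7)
The plan is to observe that Theorem~4.3 is the unified formulation of the two preceding theorems: the Kronecker factor $\delta_{b,0}$ collapses the decomposition into ${\rm CDer\,}(R(0))={\rm CInn\,}(R(0))\oplus\C^\infty$ (Theorem~4.1, imported from \cite{FSW}) when $b=0$, and into ${\rm CDer\,}(R(b))={\rm CInn\,}(R(b))$ (Theorem~4.2) when $b\in\C^*$. Thus the statement follows immediately from Theorems~4.1 and~4.2, and the only nontrivial content to establish is Theorem~4.2, which I would handle by adapting the method that \cite{FSW} uses for $b=0$, tracking carefully where the hypothesis $b\neq 0$ enters.

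For Theorem~4.2, I would first exploit the $\Z$-grading to decompose any $D\in {\rm CDer\,}(R(b))$ as $D=\sum_{c\in\Z} D^c$ with $D^c\in ({\rm CDer\,}(R(b)))^c$, and argue degree by degree. For fixed $c$, write
$$D^c_\lambda(L_j)=f_j(\partial,\lambda)L_{j+c}+g_j(\partial,\lambda)I_{j+c},\quad D^c_\lambda(I_j)=h_j(\partial,\lambda)L_{j+c}+k_j(\partial,\lambda)I_{j+c},$$
apply the conformal derivation identity to the three defining brackets $[L_i\,{}_\mu\,L_j]$, $[L_i\,{}_\mu\,I_j]$, $[I_i\,{}_\mu\,I_j]$, and compare coefficients of $L_{i+j+c}$ and $I_{i+j+c}$ after expansion via sesquilinearity and skew-symmetry. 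The identity coming from $[I_i\,{}_\mu\,I_j]=0$ forces $h_j\equiv 0$; the $L$-component of the $[L_i\,{}_\mu\,L_j]$ identity pins down $f_j$ up to an inner derivation ${\rm ad}_{p(\partial)L_c}$; the $I$-component of the same identity controls $g_j$ up to ${\rm ad}_{q(\partial)I_c}$; and the $[L_i\,{}_\mu\,I_j]$ identity then pins down $k_j$.

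The crucial step is the $g_j$ equation: a direct calculation shows that the putative extra derivations $D_{\vec a}(L_j)=\sum_c a_c I_{j+c}$ satisfy the derivation property on $[L_i\,{}_\mu\,L_j]$ only when the coefficient $1-b$ equals $1$, so for $b\neq 0$ all polynomial solutions for $g_j$ are absorbed by an inner derivation ${\rm ad}_{q(\partial)I_c}$, whereas for $b=0$ constant solutions $g_j=a_c\in\C$ survive and produce the $\C^\infty$ summand. The main obstacle will be the polynomial bookkeeping that distinguishes these regimes, in particular how the substitutions $\lambda\mapsto-\lambda-\mu$ and $\partial\mapsto\partial+\mu$ interact with the coefficients $-b\partial+(1-b)(\lambda+\mu)$ and $\partial+(1-b)\mu$ arising from skew-symmetry; the degree-zero case $c=0$ must also be treated separately, since there the available inner derivations ${\rm ad}_{L_0}$ and ${\rm ad}_{I_0}$ act differently on $L$- versus $I$-components. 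Once these pieces are in place, Theorem~4.3 is immediate.
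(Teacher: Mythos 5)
Your proposal is correct and follows essentially the same route as the paper, which obtains Theorem~4.3 by simply combining Theorem~4.1 (the $b=0$ case, imported from \cite{FSW}) with Theorem~4.2 (the $b\in\C^*$ case, proved by the same graded-decomposition and coefficient-comparison method). In fact your sketch supplies more detail than the paper does --- in particular your verification that $L_j\mapsto a_cI_{j+c}$ is a conformal derivation only when $1-b=1$ is exactly the computation that produces the $\delta_{b,0}\C^\infty$ summand.
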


\section{Rank one conformal modules over $CLW(a,b)$}
Now suppose $M$ is a free conformal module of rank one over $R(b)$.
We write $M=\C[\partial]v$ and assume $L_i\,{}_\lambda\, v=f_i(\partial,\lambda)v$, $I_j\,{}_\lambda\, v=g_j(\partial,\lambda)v$, where $f_i(\partial,\lambda),g_j(\partial,\lambda)\in\C[\partial,\lambda]$. We will compute the coefficients $f_i(\partial,\lambda),g_j(\partial,\lambda)$.

Firstly, according to \cite{WCY}, we have Lemma 5.1.
\begin{lemm}\label{500}
There exist $\Delta,\alpha,c\in\C$ such that $f_i(\partial,\lambda)=c^i(\partial+\Delta\lambda+\alpha)$~for any~$i\in\Z$.
\end{lemm}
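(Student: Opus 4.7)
The plan is to restrict attention to the Lie conformal subalgebra $CW = \bigoplus_{i\in\Z}\C[\partial]L_i$ of $R(b)$, which (as noted right after Proposition 3.5) is isomorphic to the loop Virasoro Lie conformal algebra. Since the $\lambda$-action of every $L_i$ on $v$ is governed solely by $f_i$ and does not involve any $I_j$, the formula $L_i\,{}_\lambda\, v = f_i(\partial,\lambda)v$ endows $M = \C[\partial]v$ with the structure of a rank one conformal module over $CW$ by restriction. The asserted shape of the $f_i$ is then exactly the classification of rank one conformal modules over the loop Virasoro Lie conformal algebra established in \cite{WCY}, so the lemma is an immediate corollary of that result.

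To indicate the mechanism behind \cite{WCY}, here is the skeleton I would follow if expanding the argument. Applying the Jacobi axiom to $L_i, L_j, v$ together with $[L_i\,{}_\lambda\, L_j] = (\partial+2\lambda)L_{i+j}$ and using conformal sesquilinearity yields the polynomial identity
\begin{equation*}
f_j(\partial+\lambda,\mu)f_i(\partial,\lambda) - f_i(\partial+\mu,\lambda)f_j(\partial,\mu) = (\lambda-\mu)\,f_{i+j}(\partial,\lambda+\mu)
\end{equation*}
for all $i,j\in\Z$. I would then proceed in three stages. Specializing to $i=j=0$ reproduces the classical Virasoro rank one equation, so by the Cheng--Kac classification \cite{CK} one gets $f_0(\partial,\lambda) = \partial + \Delta\lambda + \alpha$ for some $\Delta,\alpha \in \C$. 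Specializing to $j=0$ with this known $f_0$ and comparing polynomial coefficients in $\mu$ forces $f_i(\partial,\lambda) = c_i(\partial + \Delta\lambda + \alpha)$ for some $c_i \in \C$. Finally, reinserting this ansatz into the general identity collapses it to the multiplicative relation $c_ic_j = c_{i+j}$; since $f_0 \neq 0$ forces $c_0 = 1$, setting $c := c_1$ yields $c_i = c^i$ for all $i\in\Z$.

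The main obstacle is the middle step: once $f_0 = \partial + \Delta\lambda + \alpha$ is pinned down, showing that every $f_i$ must be a scalar multiple of exactly this linear polynomial, with no extra dependence on $\partial$ or $\lambda$ of higher degree, requires careful degree-tracking in the two-variable functional equation, together with the observation that the recursion excludes any $c_i = 0$ in the nondegenerate case. Since this delicate polynomial analysis is carried out in detail in \cite{WCY}, we invoke that reference to conclude.
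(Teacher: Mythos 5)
Your proposal is correct and follows essentially the same route as the paper, which likewise obtains Lemma~5.1 by restricting the action to the loop Virasoro conformal subalgebra $\bigoplus_{i\in\Z}\C[\partial]L_i$ and citing the rank one module classification of \cite{WCY}. Your sketch of the underlying functional equation and the reduction $c_ic_j=c_{i+j}$ accurately reflects the mechanism of that cited result.
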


Using the similar method of \cite{FSW}, we conclude the following lemma.
\begin{lemm}\label{502}
$g_i(\partial,\lambda)=\delta_{b,0}d c^i$ ~where~ $c,d\in\C$, $i\in\Z$.
\end{lemm}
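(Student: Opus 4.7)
The plan is to write $I_j\,{}_\lambda\,v = g_j(\partial,\lambda)v$ and extract functional equations from the two module compatibility axioms available on $(L_i,I_j)$ and $(I_i,I_j)$. The strategy follows \cite{FSW}: first express every $g_j$ in terms of $g_0$, then reduce $g_0$ by a degree analysis.

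First, applying the compatibility axiom $L_i\,{}_\lambda\,(I_j\,{}_\mu\,v) - I_j\,{}_\mu\,(L_i\,{}_\lambda\,v) = [L_i\,{}_\lambda\,I_j]\,{}_{\lambda+\mu}\,v$ and substituting $f_i(\partial,\lambda)=c^i(\partial+\Delta\lambda+\alpha)$ from Lemma~\ref{500} yields
$$c^i\bigl[g_j(\partial+\lambda,\mu)(\partial+\Delta\lambda+\alpha)-(\partial+\mu+\Delta\lambda+\alpha)g_j(\partial,\mu)\bigr] = -(\mu+b\lambda)\,g_{i+j}(\partial,\lambda+\mu).$$
Setting $\lambda=0$ collapses the bracket on the left to $-\mu\,g_j(\partial,\mu)$, so $g_{i+j} = c^i g_j$, and in particular $g_i = c^i h$ with $h := g_0$. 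Taking $i=j=0$ in the displayed identity then produces the core functional equation
$$h(\partial+\lambda,\mu)(\partial+\Delta\lambda+\alpha) - (\partial+\mu+\Delta\lambda+\alpha)h(\partial,\mu) = -(\mu+b\lambda)h(\partial,\lambda+\mu). \quad (\star)$$

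Next, applying the compatibility axiom to $(I_i,I_j)$ with $[I_i\,{}_\lambda\,I_j]=0$ and using $g_i = c^i h$ yields the second functional equation
$$h(\partial+\lambda,\mu)\,h(\partial,\lambda) = h(\partial+\mu,\lambda)\,h(\partial,\mu). \quad (\star\star)$$
The crucial step is a degree argument on $(\star\star)$. If $h$ has $\partial$-degree $N\geq 1$ with leading coefficient $h_N(\mu)\in\C[\mu]$, then the $\partial^{2N}$ coefficient matches trivially, while comparing the $\partial^{2N-1}$ coefficients on both sides yields
$$N\,h_N(\mu)\,h_N(\lambda)\,(\lambda-\mu) = 0$$
as a polynomial identity in $\lambda,\mu$, which forces $h_N\equiv 0$, a contradiction. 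Hence $h$ has $\partial$-degree $0$, so $h = h(\mu)$ is a polynomial in $\mu$ alone.

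Finally, substituting $h=h(\mu)$ into $(\star)$ collapses it to $-\mu\,h(\mu) = -(\mu+b\lambda)\,h(\lambda+\mu)$. Setting $\mu=0$ gives $b\lambda\,h(\lambda)=0$: if $b\neq 0$ then $h\equiv 0$; if $b=0$ the identity reduces to $h(\mu)=h(\lambda+\mu)$ for all nonzero $\mu$, forcing $h$ to be a constant $d\in\C$. In either case $g_i(\partial,\lambda) = c^i h = \delta_{b,0}\,d\,c^i$, as claimed. The main obstacle is the degree reduction via $(\star\star)$: one must notice that the $\partial^{2N-1}$ coefficient carries the antisymmetric factor $\lambda-\mu$, which is precisely what forbids any $\partial$-dependence of $h$; once this is established, the case split on $b$ is immediate.
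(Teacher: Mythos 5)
Your proof is correct, and it follows essentially the route the paper intends: the paper gives no argument for this lemma beyond citing the method of \cite{FSW}, and your derivation of $g_{i+j}=c^ig_j$ from the $(L_i,I_j)$ compatibility at $\lambda=0$, the elimination of $\partial$-dependence via the $[I_i\,{}_\lambda\,I_j]=0$ identity, and the final case split on $b$ from $-\mu h(\mu)=-(\mu+b\lambda)h(\lambda+\mu)$ is exactly that method. All the computations check out, so your writeup in fact supplies the details the paper omits.
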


From above discussions, we obtain the main result of this section.
\begin{theo}\label{507}
A nontrivial free conformal module of rank one over $R(b)$ is isomorphic to $M(\Delta,\alpha,c,d)$ for some $\Delta,\alpha,d\in\C, c\in\C^*$, where $M(\Delta,\alpha,c,d)=\C[\partial]v$ and $\lambda$-actions are given by
\begin{equation*}
L_i\, {}_\lambda \,v=c^i(\partial+\Delta\lambda+\alpha)v,\ \ I_i\, {}_\lambda \,v=\delta_{b,0}d c^i v.
\end{equation*}
Furthermore, $M(\Delta,\alpha,c,d)$ is irreducible if and only if $\Delta\neq0$.
\end{theo}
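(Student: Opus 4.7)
The plan is to assemble Lemmas~\ref{500} and \ref{502}, verify that the resulting formulas satisfy the conformal module axioms, and then determine irreducibility via the standard $\C[\partial]$-submodule analysis.

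First, Lemma~\ref{500} gives $L_i\,{}_\lambda\,v=c^i(\partial+\Delta\lambda+\alpha)v$ for some $\Delta,\alpha,c\in\C$, and Lemma~\ref{502} gives $I_j\,{}_\lambda\,v=\delta_{b,0}dc^j v$ for some $d\in\C$ (with the same $c$, since the $\lambda$-action of any single $L_i$ on $v$ already fixes the parameter $c$ appearing in the formula for $g_j$ via the Jacobi-type identity for $[L_i\,{}_\lambda\,I_j]$). For the expression $c^i$ to be well-defined for all $i\in\Z$ in a nontrivial module, we must have $c\in\C^*$. Next, I would check directly that these formulas satisfy the module axiom
$$a\,{}_\lambda\,(b\,{}_\mu\,v)-b\,{}_\mu\,(a\,{}_\lambda\,v)=[a\,{}_\lambda\,b]\,{}_{\lambda+\mu}\,v$$
for each $(a,b)\in\{(L_i,L_j),(L_i,I_j),(I_i,I_j)\}$ using the brackets in \eqref{3.4}; each reduces to a routine polynomial identity in $\partial,\lambda,\mu$ via conformal sesquilinearity.

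For irreducibility, every $\C[\partial]$-submodule of $M=\C[\partial]v$ has the form $p(\partial)\C[\partial]v$ for some $p\in\C[\partial]$. Invariance under $L_0\,{}_\lambda\,-$ requires $p(\partial)$ to divide $p(\partial+\lambda)(\partial+\Delta\lambda+\alpha)$ in $\C[\partial,\lambda]$. A short substitution argument rules out any root $\beta\ne-\alpha$ of $p$ (since then $p(\beta+\lambda)(\beta+\Delta\lambda+\alpha)$ is a nonzero polynomial in $\lambda$), so $p=(\partial+\alpha)^k$. Setting $u=\partial+\alpha$, one expands $(u+\lambda)^k(u+\Delta\lambda)$ modulo $u^k$ and reads off the $u^0$-coefficient as $\Delta\lambda^{k+1}$, which must vanish; when $\Delta\ne 0$ this forces $k=0$ and hence $p$ constant, proving $M$ is irreducible. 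Conversely, when $\Delta=0$ the polynomial $p(\partial)=\partial+\alpha$ satisfies the divisibility, and $(\partial+\alpha)\C[\partial]v$ is then a proper nonzero conformal submodule (after the parallel $I_i$-invariance check).

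The main obstacle is the divisibility analysis in the case $\Delta\neq0$: neither a single substitution nor a naive degree bound suffices, and the decisive step is the Taylor expansion in $u=\partial+\alpha$ that isolates the obstruction $\Delta\lambda^{k+1}$ at $u^0$.
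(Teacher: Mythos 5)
Your classification argument follows the paper's own route exactly: the paper's ``proof'' of this theorem consists precisely of combining Lemma~\ref{500} and Lemma~\ref{502} (both of which are themselves only cited from earlier work), and it neither verifies the module axioms nor proves the irreducibility criterion. So the extra content you supply is genuinely additional, and your forward direction is sound: every $\C[\partial]$-submodule has the form $p(\partial)\C[\partial]v$, the substitution $\partial=\beta$ rules out roots $\beta\ne-\alpha$ of $p$, and the $u^0$-coefficient $\Delta\lambda^{k+1}$ of $(u+\lambda)^k(u+\Delta\lambda)$ forces $k=0$ once $\Delta\ne0$.

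The gap is in the converse. When $b=0$, $\Delta=0$ and $d\ne0$, the ``parallel $I_i$-invariance check'' you defer actually fails: $I_i\,{}_\lambda\,((\partial+\alpha)v)=(\partial+\lambda+\alpha)\,dc^i\,v$, and $(\partial+\alpha)$ does not divide $(\partial+\lambda+\alpha)$ in $\C[\partial,\lambda]$ (substitute $\partial=-\alpha$ to get $\lambda dc^i\ne0$). More is true: $I_0$-invariance of any candidate $p(\partial)\C[\partial]v$ with $\deg p\ge1$ would require $p(\partial)\mid p(\partial+\lambda)$, which is impossible for nonconstant $p$, so $M(0,\alpha,c,d)$ with $b=0$ and $d\ne0$ has no proper nonzero conformal submodules and is in fact irreducible. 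Hence the ``only if'' half of the irreducibility claim, as literally stated, fails in this case (the correct criterion is that $M(\Delta,\alpha,c,d)$ is irreducible if and only if $\Delta\ne0$ or $\delta_{b,0}d\ne0$), and your proof cannot close that direction without either restricting to $\delta_{b,0}d=0$ or amending the statement. Since the paper offers no argument for irreducibility at all, your attempt is valuable precisely because it exposes where the stated equivalence needs repair; you should make the failure of the $I_i$-check explicit rather than asserting that the submodule exists.
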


\section{The central extensions of $CLW(a,b)$}
In this section, we shall consider the central extensions of $R(b)$.

An extension of a Lie conformal algebra $R$ by an abelian Lie conformal algebra $G$ is a short exact sequence of Lie conformal algebra
\begin{equation*}
0\rightarrow G \rightarrow \widehat{R} \rightarrow R \rightarrow 0.
\end{equation*}
That is to say, $\widehat{R}$ is called an extension of $R$ by $G$. This extension is said to be central if
\begin{equation*}
G\subset Z(\widehat{R})=\{x\in\widehat{R}\,|\, [x\,_\lambda \, y]=0, \forall~y\in \widehat{R}\},~\partial G=0.
\end{equation*}
Let $\widehat{R}$ be a central extension of $R$ by a one-dimensional center $\C\mathfrak{c}$. This means that $\widehat{R}\cong R\bigoplus \C\mathfrak{c}$ as vector spaces, and
$$[a\,_\lambda\, b]_{\widehat{R}}=[a\,_\lambda\, b]_{R}+\phi_{\lambda}(a, b)\mathfrak{c},$$
where $\phi_\lambda: R\times R \rightarrow \C[\lambda]$ is a bilinear map. It follows from the axioms of Lie conformal algebra that $\phi_\lambda$ satisfy:
\begin{equation}\label{601}
\aligned
&\phi_{\lambda}(a, b)=- \phi_{-\partial-\lambda}(b, a);\\
&\phi_{\lambda}(\partial a, b)=-\lambda \phi_{\lambda}(a, b)=-\phi_{\lambda}(a,\partial b);\\
&\phi_{\lambda+\mu}([a\,_\lambda \, b], c)=\phi_{\lambda}(a, [b\,_\mu\, c])-\phi_{\mu}(b, [a\,_\lambda\, c]);
\endaligned
\end{equation}
for all $a,b,c\in R$. The map $\phi_\lambda$ satisfying (\ref{601}) is called a $2$-cocycle of $R$.

Now we will compute the central extension $\widehat{R(b)}$ of $R(b)$ by a one-dimensional center $\C\mathfrak{c}$, i.e., $\widehat{R(b)}=R(b)\oplus\C\mathfrak{c}$, and the $\lambda$-brackets (\ref{3.4}) are replaced by
\begin{eqnarray*}&&
[L_i\,{}_\lambda\, L_j]=(\partial+2\lambda)L_{i+j}+A_{\lambda}(L_{i},L_{j})\mathfrak{c},
\\&&[L_i\,{}_\lambda\, I_j]=(\partial+(1-b)\lambda)I_{i+j}+B_{\lambda}(L_{i},I_{j})\mathfrak{c},
\\&&[I_i\,{}_\lambda\, I_j]=C_{\lambda}(I_{i},I_{j})\mathfrak{c},
\end{eqnarray*}
where $A_{\lambda},B_{\lambda},C_{\lambda}:R(b)\otimes R(b)\rightarrow\C[\lambda]$ are bilinear maps.

In the following, our main work is to determine $A_{\lambda}(L_{i},L_{j}),B_{\lambda}(L_{i},I_{j})$ and $C_{\lambda}(I_{i},I_{j})$.

Firstly, according to the conclusion of \cite{H}, we have the following equality
\begin{equation}\label{620}
A_{\lambda}(L_{i},L_{j})=A(i+j)\lambda+A'(i+j)\lambda^3,
\end{equation}
where $A$ and $A'$ are complex functions.

Next we apply the Jacobi identity to $(L_{i},L_{j},I_{k})$, and one has
\begin{eqnarray}\label{621}
&&(\lambda-\mu)B_{\lambda+\mu}(L_{i+j}, I_{k})=(\lambda+(1-b)\mu)B_{\lambda}(L_{i}, I_{j+k})-(\mu+(1-b)\lambda)B_{\mu}(L_{j}, I_{i+k}).
\end{eqnarray}
We denote $B_{\lambda}(L_{i},I_{j})=\sum_{m=0}^{n}b_{m}(L_{i},I_{j})\lambda^{m}\in \C[\lambda]$ with $b_{n}(L_{i},I_{j})\neq0$.
Then we get\begin{eqnarray}\label{622}
&&(\lambda-\mu)\sum_{m=0}^{n}b_{m}(L_{i+j},I_{k})(\lambda+\mu)^{m}=(\lambda+(1-b)\mu)\sum_{m=0}^{n}b_{m}(L_{i},I_{j+k})\lambda^{m}-(\mu+(1-b)\lambda)\sum_{m=0}^{n}b_{m}(L_{j},I_{i+k})\mu^{m}.
\end{eqnarray}

By using the similar methods of computing $A_{\lambda}(L_{i},L_{j})$, one can deduce that
\begin{eqnarray*}&&
b_{0}(L_{j},I_{k})=\delta_{b,1}B(i+j),~~~b_{1}(L_{j},I_{k})=B'(i+j),
\\&&b_{2}(L_{j},I_{k})=\delta_{b,0}B''(i+j), ~~~b_{3}(L_{j},I_{k})=\delta_{b,-1}B'''(i+j)
\end{eqnarray*}
where $B$, $B'$, $B''$ and $B'''$ are complex functions.

It is easily verified that $b_{m}(L_{i},I_{j})=0$ for $m>3$.

Therefore, we have
\begin{equation}\label{623}
B_{\lambda}(L_{i},I_{j})=\delta_{b,1}B(i+j)+B'(i+j)\lambda+\delta_{b,0}B''(i+j)\lambda^2+\delta_{b,-1}B'''(i+j)\lambda^3.
\end{equation}

Applying the Jacobi identity to $(L_{i},I_{j},I_{k})$ and comparing the coefficients of $\lambda^i \mu^j$, we show that
\begin{equation}\label{624}
C_{\lambda}(I_{i},I_{j})=\delta_{2b,1}C(i+j)+\delta_{b,0}C'(i+j)\lambda.
\end{equation}
where $C$ and $C'$ are complex functions.

Finally, from above discussions, we obtain the main result of this section.
\begin{theo}\label{6000}
The one-dimensional central extension $\widehat{R(b)}$ of $R(b)$ has the following form:
\begin{eqnarray*}&&
[L_i\,{}_\lambda\, L_j]=(\partial+2\lambda)L_{i+j}+(A(i+j)\lambda+A'(i+j)\lambda^3)\mathfrak{c},
\\&&[L_i\,{}_\lambda\, I_j]=(\partial+(1-b)\lambda)I_{i+j}+(\delta_{b,1}B(i+j)+B'(i+j)\lambda+\delta_{b,0}B''(i+j)\lambda^2+\delta_{b,-1}B'''(i+j)\lambda^3)\mathfrak{c},
\\&&[I_i\,{}_\lambda\, I_j]=(\delta_{2b,1}C(i+j)+\delta_{b,0}C'(i+j)\lambda)\mathfrak{c},
\end{eqnarray*}
where $A$, $A'$, $B$, $B'$, $B''$, $B'''$, $C$  and $C'$ are complex functions.
\end{theo}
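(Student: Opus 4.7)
The plan is to compile the three bilinear maps $A_\lambda$, $B_\lambda$, $C_\lambda$ into the stated form, each determined by applying the 2-cocycle axioms \eqref{601} to a carefully chosen triple. For $A_\lambda(L_i, L_j)$, I would invoke the fact that the Virasoro-type cocycle equation on $(L_i, L_j, L_k)$ has already been solved in \cite{H}, producing \eqref{620} directly.

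For $B_\lambda(L_i, I_j)$, I would apply the Jacobi identity on $\phi_\lambda$ to $(L_i, L_j, I_k)$, which yields the functional equation \eqref{621}. Writing $B_\lambda(L_i, I_j) = \sum_{m=0}^{n} b_m(L_i, I_j)\lambda^m$ and expanding $(\lambda+\mu)^m$ by the binomial theorem in \eqref{622}, the coefficient of each monomial $\lambda^p \mu^q$ gives a linear relation among the $b_m$'s. Starting from the top degree and descending, the leading monomial coefficients force identities of the form $P(m, b)\cdot b_m = 0$, whose solvability picks out at most one admissible value of $b$ per degree: $m = 0$ requires $b = 1$, $m = 2$ requires $b = 0$, and $m = 3$ requires $b = -1$, while $m = 1$ is unconstrained and $m > 3$ is excluded. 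Assembling the surviving coefficients yields \eqref{623}. For $C_\lambda(I_i, I_j)$, the Jacobi identity on $(L_i, I_j, I_k)$, combined with \eqref{623}, reduces after the analogous coefficient comparison to \eqref{624}, with the constant term surviving only for $2b = 1$ and the linear term only for $b = 0$.

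The principal technical obstacle is the coefficient extraction in the $B_\lambda$ step: the parameter $(1-b)$ threads through every term of \eqref{622}, so the bookkeeping that separates the cases $b \in \{-1, 0, 1\}$ from generic $b$ must be executed with care, and skew-symmetry $B_\lambda(L_i, I_j) = -B_{-\partial - \lambda}(I_j, L_i)$ together with conformal sesquilinearity from \eqref{601} must be invoked to discard parasitic $\partial$-dependencies. Once \eqref{620}, \eqref{623}, \eqref{624} are in hand, substitution into the central extension's defining bracket gives the stated formulas. For a complete classification one should finally verify the converse---that each such eight-tuple of complex functions $A, A', B, B', B'', B''', C, C'$ does yield a genuine 2-cocycle; this is a routine plug-in check of the axioms in \eqref{601} and introduces no further constraints.
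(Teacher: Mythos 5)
Your proposal follows the paper's own route exactly: quote \cite{H} for $A_\lambda(L_i,L_j)$, derive the functional equation \eqref{621} from the Jacobi identity on $(L_i,L_j,I_k)$ and solve it degree by degree to isolate the admissible values of $b$ for each power of $\lambda$, then use the Jacobi identity on $(L_i,I_j,I_k)$ for $C_\lambda$. The only additions beyond the paper are harmless: the cocycle already takes values in $\C[\lambda]$ so there are no $\partial$-dependencies to discard, and the converse verification you mention is a reasonable extra check the paper omits.
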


In particular, we have the one-dimensional central extension of loop Heisenberg-Virasoro Lie conformal algebra studied in \cite{FSW}
\begin{eqnarray*}&&
[L_i\,{}_\lambda\, L_j]=(\partial+2\lambda)L_{i+j}+(A(i+j)\lambda+A'(i+j)\lambda^3)\mathfrak{c},
\\&&[L_i\,{}_\lambda\, I_j]=(\partial+\lambda)I_{i+j}+(B'(i+j)\lambda+B''(i+j)\lambda^2)\mathfrak{c},
\\&&[I_i\,{}_\lambda\, I_j]=C'(i+j)\lambda\mathfrak{c}.
\end{eqnarray*}

\section{The extensions of conformal modules}
Above all we review some definitions about extensions of conformal modules.

For two conformal modules $V$ and $W$ over a Lie conformal algebra $R$, an extension $E$ of $W$ by $V$ is a module over $R$ which satisfies an exact sequence
\begin{equation*}
0\rightarrow V\rightarrow E\rightarrow W\rightarrow0,
\end{equation*}


Clearly, an extension can be regarded as the direct sum of vector spaces $E=V\oplus W$, where $V$ is a submodule of $E$, while for $w$ in $W$ we can obtain
\begin{equation*}
a_\cdot w=aw+\phi_{a}(w),\mbox{ \ $a\in R$,}
\end{equation*}
where $\phi_{a}:W\rightarrow V$ is a linear map satisfying the cocycle condition:~$\phi_{[a,b]}(w)=\phi_{a}(bw)+a\phi_{b}(w)-\phi_{b}(aw)-b\phi_{a}(w)$, $b\in R$. The set of these cocycles forms a vector space over $\C$. Cocycles corresponding to the trivial extension are called trivial cocycles. The dimension of the quotient space is called the dimension of the space of extensions of $W$ by $V$. The quotient space is denoted by ${\rm Ext}(W, V)$.

By Theorem~5.3, we have a nontrivial free conformal module $M(\Delta,\alpha,c,d)=\C[\partial]v$ of rank one over $R(b)$ with $\lambda$-actions given by
\begin{equation*}
L_i\, {}_\lambda \,v=c^i(\partial+\Delta\lambda+\alpha)v,\ \ I_i\, {}_\lambda \,v=\delta_{b,0}d c^i v.
\end{equation*}

We first consider extensions of $R(b)$-modules of the form
\begin{equation*}
0\longrightarrow \C_\b \longrightarrow E \longrightarrow M(\Delta, \alpha, c, d)\longrightarrow 0,
\end{equation*}
where $\C_\b=\C v_\b$ denotes the 1-dimensional vector space over $\C$ on which we have an action of $R(b)$ defined by
\begin{equation*}
L_{\lambda}v_\b=I_{\lambda}v_\b=0,\ \ \partial v_\b= \beta v_\b.
\end{equation*}
Then we have $E=\C[\partial]v\oplus \C_\b$ and $\lambda$-action defined by
\begin{equation}
\aligned
&L_i\,_\lambda\, v_\b=I_i\,_\lambda\, v_\b=0,\\
&L_i\,_\lambda\, v= c^i(\partial+\Delta \lambda +\a)v+f_i(\lambda)v_\b,\\
&I_i\,_\lambda\, v= \delta_{b,0}d c^iv+g_i(\lambda)v_\b,\\
\endaligned
\end{equation}
where $f_i(\lambda),g_i(\lambda)\in\C[\lambda]$.

According to \cite{W}, we have the following two lemmas. Since \cite{W} is a doctor dissertation, for readers' convenience, we supply an outline of the proof.
Firstly, from the definition of the conformal module, we can get the following equality
\begin{equation}\label{7000005}
(\lambda-\mu)f_{i+j}(\lambda+\mu)=c^j(\a+\b+\lambda+\Delta\mu)f_{i}(\lambda)-c^i(\a+\b+\Delta\lambda+\mu)f_{j}(\mu).
\end{equation}

Letting $i=j=\mu=0$ and $i=\mu=0$ in (\ref{7000005}), it follows that Lemma~7.1 holds.

At the same time, using the similar method of \cite{CKW}, we conclude Lemma~7.2.

\begin{lemm}\label{701}
If $\a+\b\neq 0$, then $f_i(\lambda)=0$.
\end{lemm}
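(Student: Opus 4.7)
The plan is to exploit the cocycle identity (7.2) by specializing variables, exactly as the authors suggest, and then to recognize that whatever remains has the form of a coboundary (trivial cocycle) so that the extension can be trivialized in the $L_i$-direction.

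First I would substitute $i=j=\mu=0$ into (7.2). The $\lambda$-terms on the left combine with the $f_{0}(\lambda)$-term on the right to give
\begin{equation*}
(\alpha+\beta)f_{0}(\lambda)=(\alpha+\beta+\Delta\lambda)\,f_{0}(0),
\end{equation*}
so, using $\alpha+\beta\neq 0$, we obtain the explicit expression $f_{0}(\lambda)=p\,(\alpha+\beta+\Delta\lambda)$ with $p:=f_{0}(0)/(\alpha+\beta)\in\C$. Thus $f_{0}$ is completely determined by the single constant $p$.

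Next I would feed this back into the specialization $i=\mu=0$ of (7.2), which reads
\begin{equation*}
\lambda f_{j}(\lambda)=c^{j}(\alpha+\beta+\lambda)f_{0}(\lambda)-(\alpha+\beta+\Delta\lambda)f_{j}(0).
\end{equation*}
Setting $\lambda=0$ gives $f_{j}(0)=c^{j}f_{0}(0)=c^{j}p(\alpha+\beta)$, and substituting this and the formula for $f_{0}(\lambda)$ back in, the right-hand side becomes $c^{j}\lambda(\alpha+\beta+\Delta\lambda)p$. Dividing by $\lambda$ yields $f_{j}(\lambda)=c^{j}p(\alpha+\beta+\Delta\lambda)$ for every $j\in\Z$.

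Finally, I would observe that this is precisely a trivial cocycle: replacing the generator $v\in E$ by $v':=v+p\,v_\b$ produces
\begin{equation*}
L_i\,{}_\lambda\, v'=c^i(\partial+\Delta\lambda+\alpha)v'+\bigl[f_i(\lambda)-c^i(\alpha+\beta+\Delta\lambda)p\bigr]v_\b,
\end{equation*}
so the new cocycle satisfies $\widetilde{f}_i(\lambda)=0$ for all $i$. Since extensions are classified modulo trivial cocycles, this gives $f_i(\lambda)=0$ in $\mathrm{Ext}$, proving the lemma.

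The main (and really only) obstacle is the small bookkeeping step of identifying the common factor $c^{j}(\alpha+\beta+\Delta\lambda)$ in the solved-out $f_j(\lambda)$ as the coboundary coming from the shift $v\mapsto v+p\,v_\b$; once that is done, the rest is a direct calculation with the cocycle identity. The hypothesis $\alpha+\beta\neq 0$ is used exactly once, to solve for $f_0(\lambda)$ from the $i=j=\mu=0$ specialization.
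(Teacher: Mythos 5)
Your proposal is correct and follows exactly the route the paper indicates (specializing \eqref{7000005} at $i=j=\mu=0$ and then at $i=\mu=0$), and you rightly supply the one step the paper leaves implicit: the resulting $f_i(\lambda)=c^i p(\alpha+\beta+\Delta\lambda)$ is not literally zero but is the trivial cocycle killed by the shift $v\mapsto v+p\,v_\beta$, which is what the lemma means since extensions are classified modulo trivial cocycles.
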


\begin{lemm}\label{703}
If $\a+\b=0$, one has
\begin{equation*}
f_i(\lambda)=
\begin{cases}
ic^{i-1}e, &\ \text{if~}\  \Delta=-1,\\[4pt]
c^{i}\lambda^2e, &\ \text{if~}\  \Delta=1,\\[4pt]
c^{i}\lambda^3e, &\ \text{if~}\  \Delta=2,\\[4pt]
0,&\  \text{otherwise}.
\end{cases}
\end{equation*}
where $e\in\C$.
\end{lemm}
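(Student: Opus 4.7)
The plan is to exploit the functional identity (\ref{7000005}) under the hypothesis $\a+\b=0$, which reads
$$(\la-\mu)f_{i+j}(\la+\mu)=c^j(\la+\Delta\mu)f_i(\la)-c^i(\Delta\la+\mu)f_j(\mu),$$
and to reduce the classification via three progressively more specialized substitutions. First I would set $i=j=0$, yielding the Virasoro-type two-cocycle equation $(\la-\mu)f_0(\la+\mu)=(\la+\Delta\mu)f_0(\la)-(\Delta\la+\mu)f_0(\mu)$. A direct degree analysis (writing $f_0=\sum_k a_k\la^k$ and matching coefficients of $\la^r\mu^s$ on both sides), or a citation of the classical classification in \cite{CKW,W}, shows that modulo trivial cocycles the only nonzero $f_0$ are $f_0(\la)=e\la^2$ when $\Delta=1$ and $f_0(\la)=e\la^3$ when $\Delta=2$; in particular $f_0\equiv 0$ for $\Delta=-1$ and for all other generic values.

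Next, knowing $f_0$, I would set $j=0$ in (\ref{7000005}) to arrive at
$$(\la-\mu)f_i(\la+\mu)=(\la+\Delta\mu)f_i(\la)-c^i(\Delta\la+\mu)f_0(\mu).$$
When $\Delta\in\{1,2\}$, the ansatz $f_i(\la)=c^if_0(\la)$ solves this equation, and the residual homogeneous equation $(\la-\mu)g_i(\la+\mu)=(\la+\Delta\mu)g_i(\la)$ for $g_i:=f_i-c^if_0$ admits only the zero solution (set $\mu=\la$ to obtain $(1+\Delta)\la g_i(\la)=0$). The same $\mu=\la$ substitution eliminates $f_i$ altogether whenever $\Delta\notin\{-1,1,2\}$, since in that range $f_0=0$ and the coefficient $1+\Delta$ is nonzero.

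The delicate case is $\Delta=-1$: the $\mu=\la$ trick degenerates, and although $f_0=0$ one still needs to determine the $f_i$. Here the $j=0$ equation collapses to $(\la-\mu)f_i(\la+\mu)=(\la-\mu)f_i(\la)$, forcing each $f_i$ to be a constant $e_i\in\C$. Substituting $f_i=e_i$ back into the full two-parameter identity (\ref{7000005}) and cancelling $\la-\mu$ yields the additive-multiplicative recursion
$$e_{i+j}=c^je_i+c^ie_j.$$
Taking $i=j=0$ gives $e_0=0$, and setting $j=1$ with induction on $i$ gives $e_i=ic^{i-1}e$ with $e:=e_1$, matching the stated formula. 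The principal obstacle is precisely this $\Delta=-1$ branch: because $f_0$ vanishes, one cannot read $f_i$ off as a scalar multiple of $f_0$, and only the two-parameter structure of (\ref{7000005}) (rather than the one-parameter $j=0$ reduction) reveals the correct additive-in-$i$ dependence of the leading constants.
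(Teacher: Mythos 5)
Your overall strategy --- classify $f_0$ from the $i=j=0$ specialization of (\ref{7000005}), propagate to all $f_i$ via the $j=0$ specialization, and treat $\Delta=-1$ separately through the recursion $e_{i+j}=c^je_i+c^ie_j$ --- is sound, and it supplies considerably more detail than the paper, which disposes of this lemma by appealing to the ``similar method'' of \cite{CKW}. In particular your handling of the $\Delta=-1$ branch is correct and is precisely the point a bare citation does not cover: there $f_0$ vanishes (after removing the trivial cocycle $-sc^i\lambda$), the $j=0$ equation only forces each $f_i$ to be a constant $e_i$, and only the full two-parameter identity yields $e_{i+j}=c^je_i+c^ie_j$, whence $e_i=ic^{i-1}e$ after passing to $c^{-i}e_i$.

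There is, however, a genuine gap at $\Delta=0$. If you actually carry out the degree analysis you invoke, writing $f_0=\sum_k a_k\lambda^k$, the $i=j=0$ equation decouples by total degree and requires, for each $k$ with $a_k\neq0$, the identity $(\lambda-\mu)(\lambda+\mu)^k=\lambda^{k+1}-\mu^{k+1}+\Delta(\mu\lambda^k-\lambda\mu^k)$. This holds for $k=1$ (any $\Delta$), $k=2$ ($\Delta=1$), $k=3$ ($\Delta=2$) --- and also for $k=0$ when $\Delta=0$, since then both sides equal $\lambda-\mu$. So for $\Delta=0$ one gets $f_0=a_0+a_1\lambda$, and the trivial cocycles $sc^i(\alpha+\beta+\Delta\lambda)$ vanish identically when $\alpha+\beta=\Delta=0$, so neither term can be normalized away. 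Your own $\mu=\lambda$ substitution in the $j=0$ equation then gives $\lambda f_i(\lambda)=c^i\lambda f_0(\lambda)$, i.e.\ $f_i=c^if_0$, and one checks directly that $f_i=c^i(a_0+a_1\lambda)$ satisfies the full identity (for $f_i=c^ia_0$ both sides equal $a_0c^{i+j}(\lambda-\mu)$) and is not a coboundary. Hence the assertion that ``$f_0\equiv0$ for all other generic values'' fails at $\Delta=0$, and the ``otherwise $0$'' branch of the lemma cannot be established there by this (or any) argument: the statement itself omits a two-parameter family of nontrivial cocycles at $\Delta=0$, presumably because \cite{CKW} classifies extensions only between irreducible modules, where $\Delta\neq0$, whereas here $M(\Delta,\alpha,c,d)$ is only assumed nontrivial. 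You should either add the $\Delta=0$ branch explicitly or restrict the hypothesis to $\Delta\neq0$; as written, the proof proves something slightly different from (and more accurate than) what it claims.
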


Next we would like to investigate $g_i(\lambda)$.

\begin{lemm}\label{704}
For any~$i, j\in\Z$,
\begin{equation}\label{705}
(b\lambda+\mu)g_{i+j}(\lambda+\mu)=c^i(\a+\b+\Delta\lambda+\mu)g_{j}(\mu)-\delta_{b,0}d c^jf_{i}(\lambda).
\end{equation}
\end{lemm}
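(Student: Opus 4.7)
The plan is to derive equation (7.2) directly from the compatibility axiom for conformal module actions, namely
\[
L_i \,{}_\lambda\,(I_j \,{}_\mu\, v) - I_j \,{}_\mu\,(L_i \,{}_\lambda\, v) = [L_i \,{}_\lambda\, I_j] \,{}_{\lambda+\mu}\, v,
\]
applied to the vector $v$ generating the extension $E = \C[\partial]v \oplus \C_\beta$, and then read off the coefficient of $v_\beta$.

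First I would compute the left-hand side. Using $L_i \,{}_\lambda\, v_\beta = 0$, the term $L_i \,{}_\lambda\,(I_j \,{}_\mu\, v)$ reduces to $\delta_{b,0} d c^j\, L_i \,{}_\lambda\, v = \delta_{b,0} d c^{i+j}(\partial + \Delta\lambda + \alpha) v + \delta_{b,0} d c^j f_i(\lambda)\, v_\beta$. For $I_j \,{}_\mu\,(L_i \,{}_\lambda\, v)$, the summand $f_i(\lambda) v_\beta$ is annihilated, and on $c^i(\partial + \Delta\lambda + \alpha) v$ I would use conformal sesquilinearity $a \,{}_\mu\, (\partial v) = (\partial + \mu) a \,{}_\mu\, v$ to pull through, getting $c^i(\partial + \mu + \Delta\lambda + \alpha)(\delta_{b,0} d c^j v + g_j(\mu) v_\beta)$, and finally replace $\partial v_\beta$ by $\beta v_\beta$. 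Subtracting, the $v$-components cancel up to a factor of $-\delta_{b,0} d c^{i+j}\mu$, while the $v_\beta$-component is
\[
\delta_{b,0} d c^j f_i(\lambda) - c^i(\alpha + \beta + \Delta\lambda + \mu) g_j(\mu).
\]

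Next I would compute the right-hand side. Since $[L_i \,{}_\lambda\, I_j] = (\partial + (1-b)\lambda) I_{i+j}$, applying $(\cdot)\,{}_{\lambda+\mu}\, v$ and using the sesquilinearity rule $(\partial a)\,{}_{\lambda+\mu}\, v = -(\lambda+\mu) a \,{}_{\lambda+\mu}\, v$ gives
\[
(-(\lambda+\mu) + (1-b)\lambda)\,I_{i+j} \,{}_{\lambda+\mu}\, v = -(b\lambda+\mu)\bigl(\delta_{b,0} d c^{i+j} v + g_{i+j}(\lambda+\mu) v_\beta\bigr).
\]
The $v$-component matches the left-hand side automatically (using $\delta_{b,0}(b\lambda+\mu) = \delta_{b,0}\mu$), so the only nontrivial content lives in the $v_\beta$-coefficient. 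Equating the two expressions for that coefficient and multiplying through by $-1$ yields exactly (7.2).

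There is no real obstacle here; the argument is a direct unpacking of the conformal-module axioms, and the mild bookkeeping challenge is to track $\partial$ carefully through the sesquilinearity identities and to remember that $\partial v_\beta = \beta v_\beta$ (as opposed to $0$) when it falls on $v_\beta$. Once both sides are expanded, comparison of the $v_\beta$-coefficients is immediate and produces the stated identity.
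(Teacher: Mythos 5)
Your proposal is correct and follows essentially the same route as the paper: both compute $[L_i\,{}_\lambda\, I_j]\,{}_{\lambda+\mu}\,v$ in two ways via the module compatibility axiom and compare the $v_\beta$-coefficients (the paper works modulo $\C[\partial,\lambda,\mu]v$, which is exactly your observation that the $v$-components match automatically). The bookkeeping with sesquilinearity and $\partial v_\beta=\beta v_\beta$ is carried out correctly.
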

\begin{proof}
A direct computation shows that
\begin{eqnarray*}\label{c2}
&&[L_i\,{}_\lambda\, I_j]\,{}_{\lambda+\mu} v=((\partial+(1-b)\lambda)I_{i+j})_{\lambda+\mu} v\nonumber\\
&=&-(b\lambda+\mu)g_{i+j}(\lambda+\mu) v_\b\ \  ({\rm mod}\ \C[\partial, \lambda, \mu]v),
\end{eqnarray*}
\begin{eqnarray*}\label{c2}
&&[L_i\,{}_\lambda\, I_j]\,{}_{\lambda+\mu} v=L_i\,{}_\lambda\,(I_j\, {}_\mu v)-I_j\,{}_\mu\, (L_i \,{}_\lambda v)\nonumber\\
&=&(\delta_{b,0}d c^jf_{i}(\lambda)-c^i(\a+\b+\Delta\lambda+\mu)g_{j}(\mu)) v_\b\ \  ({\rm mod}\ \C[\partial, \lambda, \mu]v).
\end{eqnarray*}

Thus, the lemma holds.
\end{proof}

\begin{lemm}\label{706}
If $\a+\b\neq 0$, then
$g_{i}(\lambda)=0$.
\end{lemm}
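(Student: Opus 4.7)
The plan is to feed equation (7.5) two convenient substitutions and deduce the vanishing of $g_i$ almost immediately, using Lemma~7.1 to kill the inhomogeneous term on the right.

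First I would invoke Lemma~7.1: since we are in the regime $\a+\b\neq 0$, we have $f_i(\lambda)=0$ for every $i$, so the last term in (7.5) disappears (regardless of whether $b=0$ or not, because of the factor $f_i(\lambda)$). The recursion becomes
\begin{equation*}
(b\lambda+\mu)g_{i+j}(\lambda+\mu)=c^i(\a+\b+\Delta\lambda+\mu)g_{j}(\mu).
\end{equation*}

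Next I would specialize $i=0$ to obtain $(b\lambda+\mu)g_{j}(\lambda+\mu)=(\a+\b+\Delta\lambda+\mu)g_{j}(\mu)$, and then set $\lambda=0$. The left-hand side becomes $\mu\,g_j(\mu)$, while the right-hand side becomes $(\a+\b+\mu)g_j(\mu)$, so
\begin{equation*}
(\a+\b)g_j(\mu)=0\quad\text{in }\C[\mu].
\end{equation*}
Since $\a+\b\neq 0$ by hypothesis, $g_j(\mu)=0$ for every $j\in\Z$, which is the claim.

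There really is no hard step here: once Lemma~7.1 eliminates the $\delta_{b,0}dc^j f_i(\lambda)$ term, the functional equation is homogeneous in $g$, and a single scalar substitution forces it to vanish. The only thing worth double-checking is that the argument is uniform in the parameter $b$ (in particular, the substitution $\lambda=0$ does not secretly require $b\neq 0$); since the coefficient $b\lambda+\mu$ collapses to $\mu$ when $\lambda=0$ irrespective of $b$, this is automatic. Hence the lemma follows with essentially no case analysis.
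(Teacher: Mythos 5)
Your proof is correct and follows essentially the same route as the paper: the paper also sets $i=0$ and $\lambda=0$ in \eqref{705} to get $\mu g_j(\mu)=(\a+\b+\mu)g_j(\mu)$ and concludes from $\a+\b\neq 0$. Your explicit appeal to Lemma~\ref{701} to discard the $\delta_{b,0}dc^jf_i(\lambda)$ term is a detail the paper leaves implicit, but it is exactly the justification needed.
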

\begin{proof}
Letting $\lambda=i=0$ in (\ref{705}), we obtain
\begin{equation}\label{707}
\mu g_{j}(\mu)=(\a+\b+\mu)g_{j}(\mu).
\end{equation}
Due to $\a+\b\neq 0$, we have $g_{j}(\mu)=0$.

This completes the proof.
\end{proof}

\begin{lemm}\label{708}
If $\a+\b=0$ and $b\neq0$, then
\begin{equation*}
g_i(\lambda)=
\begin{cases}
r c^{i}, &\ \text{if~}\  \Delta=b,\\[4pt]
0,&\  \text{otherwise}.
\end{cases}
\end{equation*}
where $r\in\C$.

\end{lemm}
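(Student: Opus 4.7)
The plan is to exploit the recurrence (\ref{705}) after simplifying it with the two hypotheses. Since $b\neq 0$ kills the $\delta_{b,0}$ term and $\alpha+\beta=0$ removes the constant term, the equation collapses to
\begin{equation*}
(b\lambda+\mu)g_{i+j}(\lambda+\mu)=c^{i}(\Delta\lambda+\mu)g_{j}(\mu),
\end{equation*}
valid for all $i,j\in\Z$ and all $\lambda,\mu$. This is a polynomial identity in two indeterminates, and the whole argument will consist of plugging in well-chosen specializations and then canceling the nonzero-divisor polynomial factor $b\lambda+\mu$.

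First I would set $i=j=0$ to isolate $g_{0}$, obtaining $(b\lambda+\mu)g_{0}(\lambda+\mu)=(\Delta\lambda+\mu)g_{0}(\mu)$. Specializing $\mu=0$ gives $b\lambda\, g_{0}(\lambda)=\Delta\lambda\, g_{0}(0)$, so $g_{0}(\lambda)=(\Delta/b)g_{0}(0)$ is a constant; writing $g_{0}(\lambda)=r$ and feeding it back yields the dichotomy $(b-\Delta)r=0$, that is, either $\Delta=b$ or $r=0$.

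Next I would specialize the general equation at $j=0$ to propagate this information:
\begin{equation*}
(b\lambda+\mu)g_{i}(\lambda+\mu)=c^{i}(\Delta\lambda+\mu)r.
\end{equation*}
In the case $\Delta\neq b$ we have $r=0$, so $(b\lambda+\mu)g_{i}(\lambda+\mu)=0$ in $\C[\lambda,\mu]$; since $b\lambda+\mu$ is not a zero divisor, $g_{i}=0$ for every $i$. In the case $\Delta=b$ the same specialization reads $(b\lambda+\mu)g_{i}(\lambda+\mu)=c^{i}r(b\lambda+\mu)$, and canceling the common factor forces $g_{i}(\lambda+\mu)=c^{i}r$ for all $\lambda,\mu$, i.e.\ $g_{i}(\lambda)=rc^{i}$.

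Finally I would verify that this candidate genuinely satisfies the full relation for arbitrary $i,j$ (a one-line check: both sides reduce to $c^{i+j}r(b\lambda+\mu)$ when $\Delta=b$), which confirms no further constraints arise and completes the classification. No serious obstacle is anticipated; the only point requiring a little care is making sure the cancellation of $b\lambda+\mu$ is legitimate in $\C[\lambda,\mu]$, which is immediate because it is a nonzero linear polynomial.
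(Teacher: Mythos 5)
Your proposal is correct and follows essentially the same route as the paper: specialize the cocycle identity at $i=j=0$ to show $g_0$ is constant and force the dichotomy $\Delta=b$ or $g_0=0$, then specialize at $j=0$ (the paper takes $\mu=j=0$, you keep $\mu$ general) and cancel the factor $b\lambda+\mu$ to get $g_i(\lambda)=rc^i$ or $0$. Your closing consistency check is a harmless extra the paper omits.
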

\begin{proof}
If $\a+\b=0$ and $b\neq0$,
then (\ref{705}) can be rewritten by
\begin{equation}\label{709}
(b\lambda+\mu)g_{i+j}(\lambda+\mu)=c^i(\Delta\lambda+\mu)g_{j}(\mu).
\end{equation}
Letting $i=j=0$ in (\ref{709}), one has
\begin{equation}\label{707}
(b\lambda+\mu)g_{0}(\lambda+\mu)=(\Delta\lambda+\mu)g_{0}(\mu).
\end{equation}
If $\Delta\neq b$, it can be easily seen that $g_{0}(\mu)=0$.

Letting $\mu=j=0$ in (\ref{709}), it follows that
\begin{equation}\label{710}
b g_{i}(\lambda)=c^i \Delta g_{0}(0).
\end{equation}
Therefore, we have $g_{i}(\lambda)=0$.

If $\Delta=b$ in (\ref{710}), we have $g_{i}(\lambda)=rc^i$ with $r=g_{0}(0)$.

So we have completed the proof.
\end{proof}

\begin{lemm}\label{711}
If $\a+\b=b=0$, we obtain \begin{equation*}
g_i(\lambda)=
\begin{cases}
c^{i}\lambda, &\ \text{if~}\  \Delta=1,~d=0,\\[4pt]
0,&\  \text{otherwise}.
\end{cases}
\end{equation*}
\end{lemm}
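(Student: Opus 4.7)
The plan is to substitute the hypotheses $\alpha+\beta=0$ and $b=0$ into the functional equation derived in Lemma~7.4 to obtain
\begin{equation*}
\mu\, g_{i+j}(\lambda+\mu)=c^{i}(\Delta\lambda+\mu)\,g_{j}(\mu)-d c^{j} f_{i}(\lambda),
\end{equation*}
and then to couple this with the explicit forms of $f_{i}(\lambda)$ supplied by Lemma~7.2 and with the extra constraint implied by the abelian relation $[I_{i}\, {}_\lambda\, I_{j}]=0$.

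The first step is to set $\lambda=0$ in order to decouple the unknowns. For $\Delta\neq -1$, Lemma~7.2 gives $f_{i}(0)=0$, so the identity collapses to $g_{i+j}(\mu)=c^{i}g_{j}(\mu)$, whence $g_{i}(\mu)=c^{i}g_{0}(\mu)$. In the exceptional case $\Delta=-1$, one has $f_{i}(0)=ic^{i-1}e$, and comparing the constant term in $\mu$ forces $de=0$, after which the same reduction goes through. Substituting $g_{j}(\mu)=c^{j}g_{0}(\mu)$ into the master identity and cancelling $c^{i+j}$ reduces the problem to a single functional equation for $g_{0}(\mu)$ whose shape depends on $\Delta$ and on the product $de$.

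The heart of the argument is the case analysis on $\Delta$. When $\Delta=1$, the reduced equation reads $\mu g_{0}(\lambda+\mu)=(\lambda+\mu)g_{0}(\mu)-de\lambda^{2}$; writing $g_{0}(\mu)=\sum_{k\geq 0}a_{k}\mu^{k}$ and comparing coefficients of $\lambda^{0},\lambda^{1},\lambda^{2}$ (higher powers then vanish automatically thanks to $\binom{1}{l}=0$ for $l\geq 2$) gives $a_{k}=0$ for $k\neq 1$ together with the side condition $de=0$, so that $g_{i}(\lambda)=r c^{i}\lambda$ for some $r\in\C$. When $\Delta=-1$, setting $\mu=\lambda$ in the reduced equation yields $\lambda g_{0}(2\lambda)=0$, hence $g_{0}\equiv 0$. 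When $\Delta=2$, the analogous scaling $g_{0}(2\lambda)=3g_{0}(\lambda)$ kills every coefficient of $g_{0}$ because no power of $2$ equals $3$. For every other value of $\Delta$ one has $f_{i}=0$, and the coefficient identities $(k-\Delta)a_{k}=0$ combined with $\binom{k}{l}a_{k}=0$ for $l\geq 2$ again force $g_{0}=0$. At this point I would invoke the Jacobi identity coming from $[I_{i}\, {}_\lambda\, I_{j}]=0$, which in the extended module translates into $dc^{j}g_{i}(\lambda)=dc^{i}g_{j}(\mu)$; substituting the surviving candidate $g_{i}(\lambda)=rc^{i}\lambda$ yields $rd(\lambda-\mu)=0$, so that $r=0$ whenever $d\neq 0$. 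Rescaling $v_{\beta}$ so that $r=1$ then delivers the claimed formula.

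The main obstacle I anticipate is not the polynomial bookkeeping itself but the subtle interplay between the two scalars $d$ and $e$: the reduced functional equation alone already admits the nontrivial solution $g_{0}(\mu)=r\mu$ in the $\Delta=1$ case even when $d\neq 0$ (the constraint $de=0$ then being satisfied through $e=0$), and it is only by bringing in the abelian Jacobi relation from $\bigoplus_{i\in\Z}\C[\partial]I_{i}$ that one can eliminate this parasitic branch and match the statement of the lemma.
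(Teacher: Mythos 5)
Your argument is essentially the paper's, with the steps reordered: the paper first uses the relation $[I_i\,{}_\lambda\,I_j]=0$ to dispose of the case $d\neq 0$ (it forces $g_i(\lambda)$ to be a constant multiple of $c^i$, which (\ref{705}) then kills), and only afterwards analyses the homogeneous equation $\mu g_{i+j}(\lambda+\mu)=c^i(\Delta\lambda+\mu)g_j(\mu)$ at $i=j=0$; you run the functional-equation analysis first and prune the surviving $\Delta=1$ branch with the $I$--$I$ relation at the end. Both routes work, and your version of the $I$--$I$ constraint, $dc^jg_i(\lambda)=dc^ig_j(\mu)$, is in fact the correct coefficient (the paper's displayed $d^2c^{i+j}(g_i(\lambda)-g_j(\mu))$ is a typo with the same consequence). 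One small slip: in the $\Delta=2$ subcase the scaling identity should read $g_0(2\lambda)=3g_0(\lambda)-de\lambda^2$, since $f_0(\lambda)=e\lambda^3$ does not drop out, and this scaled equation still admits the spurious solution $g_0(\mu)=-de\mu^2$; to finish you need one more coefficient comparison in the full equation $\mu g_0(\lambda+\mu)=(2\lambda+\mu)g_0(\mu)-de\lambda^3$ (the $\lambda^2$-coefficient gives $\sum_k\binom{k}{2}a_k\mu^{k-1}=0$, hence $a_2=0$, and then the $\lambda^3$-coefficient gives $de=0$), or alternatively the $I$--$I$ relation when $d\neq0$. Finally, note that the paper itself only obtains $g_i(\lambda)=Ac^i\lambda$ with an undetermined $A\in\C$ in the $\Delta=1$, $d=0$ case, so your closing rescaling remark matches what is actually proved.
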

\begin{proof}
If $b=0$, according to\begin{eqnarray*}\label{c2}
&&0=[I_i\,{}_\lambda\, I_j]\,{}_{\lambda+\mu} v\nonumber\\
&=&[I_i\,{}_\lambda\, I_j]\,{}_{\lambda+\mu} v=I_i\,{}_\lambda\,(I_j\, {}_\mu v)-I_j\,{}_\mu\, (I_i \,{}_\lambda v)\nonumber\\
&=&d^2 c^{i+j}(g_{i}(\lambda)-g_{j}(\mu)) v_\b\ \  ({\rm mod}\ \C[\partial, \lambda, \mu]v),
\end{eqnarray*}
then we have $d^2 c^{i+j}(g_{i}(\lambda)-g_{j}(\mu))=0$.

If $c,d\in\C^*$, then $g_{i}(\lambda)$ is a constant for any $i\in\Z$. Thanks to (\ref{705}), we at once deduce $g_{i}(\lambda)=0$.

If $\a+\b=b=d=0$,
then (\ref{705}) becomes
\begin{equation}\label{712}
\mu g_{i+j}(\lambda+\mu)=c^i(\Delta\lambda+\mu)g_{j}(\mu).
\end{equation}
Setting $i=j=0$ in (\ref{712}), we obtain
\begin{equation}\label{713}
\mu g_{0}(\lambda+\mu)=(\Delta\lambda+\mu)g_{0}(\mu).
\end{equation}

If $\Delta\neq 1$, it shows that $g_{i}(\lambda)=0$.
If $\Delta=1$, we get $g_{i}(\lambda)=Ac^{i}\lambda$, where $A\in\C$.

Therefore, we have proved the lemma.
\end{proof}

Moreover, if $\a+\b=b=0$, $c,d\in\C^*$, according to (\ref{705}), then we have $f_{i}(\lambda)=0$.

Finally, we can easily obtain the following two theorems.
\begin{theo}
If $b, c\in\C^*$, one deduce
\begin{equation*}
{\rm dim}\ {\rm Ext}\,(M(\Delta, \alpha, c, d), \C_\b)=
\begin{cases}
1+\delta_{b,-1}+\delta_{b,1}+\delta_{b,2}, &\text{if}\ \a+\b=0,\ \Delta=b,\\
0, &\text{otherwise}.
\end{cases}
\end{equation*}
\end{theo}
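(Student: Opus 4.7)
My plan is to encode an extension by the two polynomial families $\{f_i(\lambda)\}$ and $\{g_i(\lambda)\}$ of~(7.1), cut down the cocycle space using Lemmas~7.1--7.5, and then subtract the coboundaries. First, if $\alpha+\beta\ne 0$ then Lemma~7.1 forces $f_i\equiv 0$ and Lemma~7.4 forces $g_i\equiv 0$, so every such extension is split and $\dim\mathrm{Ext}=0$; I therefore assume $\alpha+\beta=0$ from now on.

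Under the standing hypothesis $b\in\C^*$, the $\delta_{b,0}dc^jf_i(\lambda)$ term in the cocycle relation of Lemma~7.3 vanishes, so the systems governing $f_i$ and $g_i$ decouple completely. This is the key observation that lets the two parameter families be analysed independently. By Lemma~7.2 the $f$-family is one-dimensional, with free parameter $e\in\C$, exactly when $\Delta\in\{-1,1,2\}$; by Lemma~7.5 the $g$-family is one-dimensional, with free parameter $r\in\C$, exactly when $\Delta=b$. In particular, in the sub-case $\Delta=b$ the cocycle space has dimension $1+\delta_{b,-1}+\delta_{b,1}+\delta_{b,2}$.

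Next I would compute the coboundaries. Replacing the lift $v$ by $v+sv_\beta$ with $s\in\C$, and using $L_i\,{}_\lambda v_\beta=I_i\,{}_\lambda v_\beta=0$, $\partial v_\beta=\beta v_\beta$, and $\alpha+\beta=0$, a short calculation gives the transformation
\begin{equation*}
f_i(\lambda)\longmapsto f_i(\lambda)-sc^i\Delta\lambda,\qquad g_i(\lambda)\longmapsto g_i(\lambda).
\end{equation*}
Hence the coboundary subspace is at most one-dimensional, sits entirely inside the $\lambda$-linear, $c^i$-weighted part of $f$, and leaves $g$ untouched. Comparing with the $\lambda$-degree profile of the generators isolated by Lemma~7.2 (pure $\lambda^0$ for $\Delta=-1$, pure $\lambda^2$ for $\Delta=1$, pure $\lambda^3$ for $\Delta=2$) shows that the parameter $e$ is never absorbed by a coboundary, and $r$ survives automatically.

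Assembling these pieces in the sub-case $\alpha+\beta=0$ and $\Delta=b$ produces the claimed dimension $1+\delta_{b,-1}+\delta_{b,1}+\delta_{b,2}$, and a case-by-case check of the remaining sub-cases completes the proof. The chief technical obstacle I anticipate is the coboundary bookkeeping: matching the $\lambda$-degree of the coboundary shift against the $\lambda$-degrees of the non-trivial generators supplied by Lemma~7.2, and assembling the dimension counts consistently across the various combinations of $\Delta$ and $b$.
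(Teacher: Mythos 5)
Your overall route is the same as the paper's (the paper itself offers no explicit proof, only the assertion that the theorem follows from Lemmas 7.1--7.5), and the individual ingredients you assemble are correct: the decoupling of the $f$- and $g$-systems when $b\neq 0$, the cocycle dimensions supplied by Lemmas 7.2 and 7.5, and the coboundary computation $f_i(\lambda)\mapsto f_i(\lambda)-sc^i\Delta\lambda$, $g_i(\lambda)\mapsto g_i(\lambda)$, which indeed absorbs none of the generators of Lemma 7.2 (they live in $\lambda$-degrees $0$, $2$, $3$, while the coboundary shift is purely $\lambda$-degree $1$).

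However, your closing step --- ``a case-by-case check of the remaining sub-cases completes the proof'' --- does not go through, and this is a genuine gap rather than routine bookkeeping. Take $\alpha+\beta=0$ and $\Delta\in\{-1,1,2\}$ with $\Delta\neq b$ (say $\Delta=1$, $b=2$). By your own analysis the two systems decouple, Lemma 7.5 forces $g_i=0$, Lemma 7.2 produces a one-parameter family of $f_i$'s, and your coboundary computation shows that parameter is never absorbed; moreover the resulting $\lambda$-action really is a module, since with $I_i{}_\lambda v=g_i(\lambda)v_\b=0$ and $I_i{}_\lambda v_\b=0$ the $(L,I,v)$ and $(I,I,v)$ Jacobi identities hold vacuously. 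Hence ${\rm dim}\,{\rm Ext}=1$ in that sub-case, whereas the theorem asserts $0$. What your argument actually establishes is ${\rm dim}\,{\rm Ext}=\delta_{\Delta,b}+\delta_{\Delta,-1}+\delta_{\Delta,1}+\delta_{\Delta,2}$ whenever $\a+\b=0$ (and $0$ when $\a+\b\neq0$), which coincides with the displayed formula only when $\Delta=b$. To close the gap you must either exhibit an additional constraint killing the $f$-cocycle when $\Delta\neq b$ (none exists, as the module check above shows), or note explicitly that the ``otherwise'' branch of the statement is incompatible with Lemma 7.2 as written and record the corrected dimension count.
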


\begin{theo}
If $b=0, c\in\C^*$, one follows
\begin{equation*}
{\rm dim}\ {\rm Ext}\,(M(\Delta, \alpha, c, d), \C_\b)=
\begin{cases}
1, &\ \text{if}\  \a+\b=d=0, \ \Delta=-1~or~2,\\
2, &\ \text{if}\  \a+\b=d=0,\ \Delta=1,\\
0, &\  \text{otherwise}.
\end{cases}
\end{equation*}
\end{theo}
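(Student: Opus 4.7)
The plan is to specialize the work of this section to $b=0$, $c\in\C^*$ and read off the answer from Lemmas~7.1, 7.3, 7.6, and 7.11, which already express their conclusions modulo trivial cocycles. Recall that an extension is encoded by polynomials $(f_i(\lambda),g_i(\lambda))_{i\in\Z}$ subject to the analog of (7.5) for $f$ and the relation of Lemma~7.4 for $g$, and two such cocycles represent the same class in ${\rm Ext}(M(\Delta,\alpha,c,d),\C_\b)$ iff they differ by a trivial one, i.e., one arising from a change of basis $v\mapsto v+Qv_\b$ with $Q\in\C$. A direct computation, of the type already used in the preceding Theorem~7.12, shows that such a change modifies $f_i(\lambda)$ by $-c^iQ(\a+\b+\Delta\lambda)$ and $g_i(\lambda)$ by $-\delta_{b,0}d c^iQ$.

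First I would dispose of the cases that contribute dimension $0$. If $\a+\b\neq 0$, Lemmas~7.1 and 7.6 give $f_i=g_i=0$ at once. If $\a+\b=0$ but $d\neq 0$ (still with $b=0$), Lemma~7.11 gives $g_i=0$, while the remark immediately following it, obtained by feeding this back into (7.5), forces $f_i=0$ as well. Either way the class of any cocycle is trivial, matching the ``otherwise'' branch of the theorem.

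The substantive case is $\a+\b=d=0$. Here Lemma~7.3 supplies the nontrivial $f$-representatives $ic^{i-1}e$, $c^i\lambda^2 e$, $c^i\lambda^3 e$ for $\Delta=-1, 1, 2$ respectively and $0$ otherwise, each with a single parameter $e\in\C$; Lemma~7.11, specialized to $d=0$, gives the nontrivial $g$-representative $Ac^i\lambda$ only when $\Delta=1$, with parameter $A\in\C$. The $f$- and $g$-contributions are independent because $d=0$ makes the trivial $g$-correction vanish, leaving $A$ unaffected, while the trivial $f$-correction $-c^iQ\Delta\lambda$ lies purely in $\lambda$-degree~$1$, distinct from the monomial degrees $\lambda^0,\lambda^2,\lambda^3$ appearing in the Lemma~7.3 representatives. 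Summing the numbers of free parameters thus yields $1+1=2$ for $\Delta=1$, $1$ for $\Delta\in\{-1,2\}$, and $0$ for all other values of $\Delta$.

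The main task, rather than a genuine obstacle, is this last bookkeeping step: verifying that when $\Delta=1$ the parameters of Lemmas~7.3 and 7.11 span complementary directions modulo the one-dimensional space of trivial cocycles. Because $d=0$ wipes out the trivial $g$-correction entirely and the trivial $f$-correction is confined to $\lambda^1$, both checks reduce to inspection of monomials in $\lambda$, so the dimensions follow directly from the counts already provided by the two lemmas.
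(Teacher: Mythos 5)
Your proposal is correct and follows essentially the same route as the paper, which simply assembles Lemmas~7.1, 7.3, 7.6, and 7.11 (together with the remark after Lemma~7.11 handling $\a+\b=0$, $d\neq0$) and counts the free parameters $e$ and $A$ modulo trivial cocycles. Your extra care in checking that the trivial correction $-c^iQ\Delta\lambda$ sits in $\lambda$-degree one, away from the representatives of Lemma~7.3, and that $d=0$ kills the trivial $g$-correction, is exactly the bookkeeping the paper leaves implicit.
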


Next we will compute ${\rm Ext}\,(\C_\b, M(\Delta, \alpha, c, d))$, i.e., the extension of $\C_\b$ by $M(\Delta, \alpha, c, d)$:
\begin{equation*}
0\longrightarrow M(\Delta, \alpha, c, d) \longrightarrow E \longrightarrow  \C_\b\longrightarrow 0,
\end{equation*}
we have $E=\C[\partial]v\oplus\C_\b$, where $\C[\partial]v\cong M(\Delta, \alpha, c, d)$ is a conformal submodule, and
\begin{equation*}
\aligned
&\partial v_\b=\b v_\b+\rho(\partial)v,\\
&L_i\,{}_\lambda\,v_\b=h_{i}(\partial, \lambda)v,\\
&I_i\,{}_\lambda\,v_\b=l_{i}(\partial, \lambda)v,
\endaligned
\end{equation*}
where $\rho(\partial)\in\C[\partial], h_{i}(\partial, \lambda), l_{i}(\partial, \lambda)\in \C[\partial, \lambda]$.

According to \cite{W}, we have the following lemma.
\begin{lemm}\label{751}
\begin{equation*}
h_{i}(\partial, \lambda)=
\begin{cases}
s c^i, &\ \text{if}\  \a+\b=0,~\Delta=1,\\
0, &\  \text{otherwise},
\end{cases}
\end{equation*}
where $s=\rho(\partial)\in\C$.
\end{lemm}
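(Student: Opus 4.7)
The plan is to extract a single functional equation from conformal sesquilinearity applied to the relation $\partial v_\b = \b v_\b + \rho(\partial) v$, and then analyze when that equation admits a nontrivial polynomial solution. Computing $L_i\,{}_\lambda\,(\partial v_\b)$ in two ways---first via $a\,{}_\lambda\,(\partial w)=(\partial+\lambda)(a\,{}_\lambda\,w)$, and second by expanding $\partial v_\b$ and using $L_i\,{}_\lambda\,v = c^i(\partial+\Delta\lambda+\a)v$---one obtains the key identity
$$(\partial+\lambda-\b)\, h_i(\partial,\lambda) = c^i \rho(\partial+\lambda)(\partial+\Delta\lambda+\a). \quad (\star)$$
Since $h_i(\partial,\lambda)\in\C[\partial,\lambda]$ and $\partial+\lambda-\b$ is irreducible in $\C[\partial,\lambda]$, it must divide the right-hand side of $(\star)$. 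Two cases arise: either $(\partial+\lambda-\b)\mid\rho(\partial+\lambda)$, equivalent to $\rho(\b)=0$; or $(\partial+\lambda-\b)\mid(\partial+\Delta\lambda+\a)$, which by comparing coefficients of $\partial$, $\lambda$, and the constant term forces $\Delta=1$ and $\a+\b=0$.

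In the first case, I would write $\rho(\partial)=(\partial-\b)\tilde\rho(\partial)$ and perform the gauge change $v_\b \mapsto v_\b' := v_\b - \tilde\rho(\partial) v$; a direct check gives $\partial v_\b' = \b v_\b'$ and $L_i\,{}_\lambda\,v_\b' = 0$, so $h_i$ is trivial as a coboundary and we may take $h_i=0$. In the second case, the factor $\partial+\Delta\lambda+\a$ coincides with $\partial+\lambda-\b$, and $(\star)$ reduces to $h_i(\partial,\lambda) = c^i\rho(\partial+\lambda)$. A further gauge change $v_\b \mapsto v_\b - q(\partial)v$ replaces $\rho(\partial)$ by $\rho(\partial)-(\partial-\b)q(\partial)$, so I can normalize $\rho$ modulo $(\partial-\b)\C[\partial]$ to its value at $\b$, obtaining the representative $\rho(\partial)=s$ with $s:=\rho(\b)\in\C$, whence $h_i(\partial,\lambda)=s\,c^i$.

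The step that needs the most care is verifying that the Jacobi identity for $(L_i,L_j,v_\b)$, namely
$$(\lambda-\mu)\,h_{i+j}(\partial,\lambda+\mu) = c^i(\partial+\Delta\lambda+\a)\,h_j(\partial+\lambda,\mu) - c^j(\partial+\Delta\mu+\a)\,h_i(\partial+\mu,\lambda),$$
imposes no new constraint. Substituting $h_i=c^i\rho(\partial+\lambda)$ in the nontrivial case collapses both sides to $c^{i+j}(\lambda-\mu)\rho(\partial+\lambda+\mu)$, and in the trivial case it holds automatically once $h_i\equiv 0$. Thus $(\star)$ together with the coboundary normalization fully determines $h_i$, exactly paralleling the argument used for $g_i$ in Lemma~7.11.
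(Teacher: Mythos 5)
Your proof is correct: the sesquilinearity identity $(\star)$ obtained by computing $L_i\,{}_\lambda\,(\partial v_\b)$ in two ways, the divisibility argument for the prime $\partial+\lambda-\b$ in $\C[\partial,\lambda]$, and the normalization of $\rho$ modulo $(\partial-\b)\C[\partial]$ by changing the lift of $v_\b$ yield exactly the stated form of $h_i$, and your Jacobi check correctly confirms that no further constraint arises. The paper omits the proof of this lemma entirely (citing only [W]), but your argument is precisely the technique the paper itself applies to $l_i$ in \eqref{752} and to $f_i,g_i$ earlier, so this is essentially the intended proof; the only blemish is the stray cross-reference to ``Lemma~7.11'' for the $g_i$ analysis, which actually occupies Lemmas~\ref{704}--\ref{711}.
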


Note from $I_i\,{}_\lambda\,(\partial v_\b)=(\partial+\lambda)l_{i}(\partial, \lambda)v$, we get
\begin{equation}\label{752}
(\partial+\lambda)l_{i}(\partial, \lambda)=\b l_{i}(\partial, \lambda)+\rho(\partial+\lambda)\delta_{b,0}d c^i.
\end{equation}
If $b\neq0$, by comparing the highest degree of $\partial$ in (\ref{752}), we have $l_{i}(\partial, \lambda)=0$.

\begin{lemm}
If $b=0$, one shows that $l_{i}(\partial, \lambda)=0$.
\end{lemm}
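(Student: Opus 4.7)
My plan is to split into two cases according to whether $d$ is zero, and in the nonzero case to pull down the form of $l_i$ from the previous equation~\eqref{752} and then annihilate it via the $[L_i\,{}_\lambda\,I_j]$ Jacobi identity on $v_\beta$.

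Specializing \eqref{752} to $b=0$ gives
\begin{equation*}
(\partial+\la-\b)\,l_i(\partial,\la)=d\,c^i\,\rho(\partial+\la).
\end{equation*}
When $d=0$, the right-hand side vanishes, and since $\C[\partial,\la]$ is an integral domain and $\partial+\la-\b\ne 0$, one concludes $l_i(\partial,\la)=0$ at once. So I may assume $d\ne 0$. In that case, the right-hand side is a polynomial in $t=\partial+\la$ only; writing $\C[\partial,\la]=\C[t][\la]$ and dividing by $(t-\b)$ inside this domain, the divisibility $\rho(t)\in (t-\b)\C[t]$ is forced, and there exists $p\in\C[t]$ with
\begin{equation*}
\rho(t)=(t-\b)\,p(t),\qquad l_i(\partial,\la)=d\,c^i\,p(\partial+\la).
\end{equation*}
At this stage $p$ is not yet constrained, so \eqref{752} alone does not finish the job.

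The remaining input is the Jacobi identity $[L_i\,{}_\la\,I_j]\,{}_{\la+\mu}\,v_\b=L_i\,{}_\la(I_j\,{}_\mu\,v_\b)-I_j\,{}_\mu(L_i\,{}_\la\,v_\b)$. Expanding both sides with $b=0$, using conformal sesquilinearity, the formulas $L_i\,{}_\la\,v=c^i(\partial+\Delta\la+\a)v$ and $I_i\,{}_\la\,v=d\,c^i\,v$, and substituting the shape $l_j(\partial,\mu)=dc^j p(\partial+\mu)$ found above, a direct computation produces the clean identity
\begin{equation*}
c^i\bigl(\partial+\Delta\la+\mu+\a\bigr)\,p(\partial+\la+\mu)=h_i(\partial+\mu,\la).
\end{equation*}
Now Lemma~7.26 splits the analysis into two subcases. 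If $(\a+\b,\Delta)\ne(0,1)$, then $h_i\equiv 0$, and the identity combined with $c\ne 0$ and the domain property of $\C[\partial,\la,\mu]$ forces $p=0$. If instead $\a+\b=0$ and $\Delta=1$, then $h_i(\partial,\la)=sc^i$ for a scalar $s$, and after dividing by $c^i$ the identity collapses, with $t=\partial+\la+\mu$, to the single polynomial equation $(t+\a)\,p(t)=s$ in $\C[t]$; a degree count forces $p=0$ (and also $s=0$). In either subcase $p=0$, whence $l_i(\partial,\la)=d c^i p(\partial+\la)=0$.

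The main obstacle is the subcase $d\ne 0$ with $(\a+\b,\Delta)=(0,1)$: here \eqref{752} by itself leaves the whole family $l_i=d c^i p(\partial+\la)$ admissible, so the only way to kill $p$ is to bring in the $L$--$I$ Jacobi together with the nontrivial value of $h_i$ from Lemma~7.26, and to track the $i$-, $\la$- and $\mu$-dependences carefully enough that the computation collapses to a one-variable polynomial identity $(t+\a)p(t)=s$. Once that identity is in hand, the conclusion is immediate.
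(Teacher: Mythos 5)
Your proof is correct, and it reaches the same endpoint as the paper by a genuinely different route in its first half. The paper first uses $0=[I_i\,{}_\la\,I_j]\,{}_{\la+\mu}v_\b$ (equation \eqref{75200}) together with a top-degree comparison in $\la$ to show that $l_i(\partial,\la)$ is independent of $\partial$, and then concludes from the $L$--$I$ Jacobi identity \eqref{755} by comparing $\partial$-degrees, using that $h_i$ is $\partial$-free by Lemma~\ref{751}. You instead squeeze everything out of the sesquilinearity relation \eqref{752}: with $b=0$ it reads $(\partial+\la-\b)\,l_i=d c^i\rho(\partial+\la)$, which disposes of the case $d=0$ in one line and, for $d\neq0$, forces the precise shape $l_i=dc^i p(\partial+\la)$ with $\rho(t)=(t-\b)p(t)$; substituting this into the $L$--$I$ Jacobi identity \eqref{754} and invoking Lemma~\ref{751} reduces the problem either to $(\partial+\Delta\la+\mu+\a)\,p(\partial+\la+\mu)=0$ or to the one-variable identity $(t+\a)p(t)=s$, both of which kill $p$ by a degree count. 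What your route buys: the $I$--$I$ cocycle computation is never needed, the case analysis is cleaner, and the final degree arguments become transparent one-variable statements; the paper's route is marginally shorter once one grants the $\partial$-independence of $l_i$. Two cosmetic remarks: the lemma you cite as ``Lemma~7.26'' is Lemma~\ref{751} of the paper (the content you use from it is what that lemma asserts), and your computation also recovers the paper's subsequent observation that $h_i=0$ when $c,d\in\C^*$ and $\a+\b=0$, since you obtain $s=0$ along the way.
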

\begin{proof}
Firstly, using\begin{eqnarray*}\label{c2}
&&0=[I_i\,{}_\lambda\, I_j]\,{}_{\lambda+\mu} v_\b=I_i\,{}_\lambda\,(I_j\, {}_\mu v_\b)-I_j\,{}_\mu\, (I_i \,{}_\lambda v_\b)\nonumber\\
&=&l_{j}(\partial+\lambda, \mu)l_{i}(\partial, \lambda)-l_{i}(\partial+\mu, \lambda)l_{j}(\partial, \mu),
\end{eqnarray*}
we conclude that \begin{equation}\label{75200}
l_{j}(\partial+\lambda, \mu)l_{i}(\partial, \lambda)=l_{i}(\partial+\mu, \lambda)l_{j}(\partial, \mu).
\end{equation}

By comparing the highest degree of $\lambda$ in both sides of (\ref{75200}), we conclude that
\begin{equation}\label{753}
l_{i}(\partial, \lambda)=l_{i}(\lambda).
\end{equation}

By\begin{eqnarray*}\label{c2}
&&[L_i\,{}_\lambda\, I_j]\,{}_{\lambda+\mu} v_\b=-(b\lambda+\mu)l_{i+j}(\partial, \lambda+\mu) v.
\end{eqnarray*}
\begin{eqnarray*}\label{c2}
&&[L_i\,{}_\lambda\, I_j]\,{}_{\lambda+\mu} v_\b=L_i\,{}_\lambda\,(I_j\, {}_\mu v_\b)-I_j\,{}_\mu\, (L_i \,{}_\lambda v_\b)\\
&=&c^i(\partial+\Delta\lambda+\a)l_{j}(\partial+\lambda, \mu)-\delta_{b,0}d c^i h_{i}(\partial+\mu, \lambda) v,
\end{eqnarray*}
we obtain
\begin{equation}\label{754}
-(b\lambda+\mu)l_{i+j}(\partial, \lambda+\mu)=c^i(\partial+\Delta\lambda+\a)l_{j}(\partial+\lambda, \mu)-\delta_{b,0}d c^i h_{i}(\partial+\mu, \lambda).
\end{equation}
If $b=0$, according to (\ref{753}), then (\ref{754}) can be rephrased by
\begin{equation}\label{755}
-\mu l_{i+j}(\lambda+\mu)=c^i(\partial+\Delta\lambda+\a)l_{j}(\mu)-d c^i h_{i}(\lambda).
\end{equation}
Comparing the degree of $\partial$ in (\ref{755}),
we obtain $l_{i}(\partial, \lambda)=l_{i}(\lambda)=0$.

Hence, the proof is finished.
\end{proof}

Moreover, if $\a+\b=b=0$, $c,d\in\C^*$, according to (\ref{755}), then we have $h_{i}(\lambda)=0$.

Finally, we get the following theorems.

\begin{theo}
If $b,c\in\C^*$, we have
\begin{equation*}
{\rm dim}\ {\rm Ext}\,(\C_\b, M(\Delta, \alpha, c, d))=
\begin{cases}
1, &\ \text{if}\  \a+\b=0,\ \Delta=1,\\
0, &\  \text{otherwise}.
\end{cases}
\end{equation*}
\end{theo}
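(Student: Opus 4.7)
The plan is to classify cocycles modulo coboundaries directly from the definitions, using Lemma~\ref{751} together with the sesquilinearity axiom of a conformal module. Since $b \in \C^*$, the computation immediately preceding Lemma~\ref{751} forces $l_i(\partial,\lambda) = 0$, so a cocycle is fully determined by the data $(\rho(\partial), \{h_i(\partial,\lambda)\}_{i\in\Z})$ through $\partial v_\b = \b v_\b + \rho(\partial) v$ and $L_i\,{}_\lambda\, v_\b = h_i(\partial,\lambda) v$.

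First I would compute $L_i\,{}_\lambda\,(\partial v_\b)$ in two ways: by sesquilinearity it equals $(\partial+\lambda) h_i(\partial,\lambda) v$, while substituting the $\partial$-action and applying $a\,{}_\lambda\,(\rho(\partial)v)=\rho(\partial+\lambda)(a\,{}_\lambda\,v)$ gives $\b h_i(\partial,\lambda) v + \rho(\partial+\lambda) c^i(\partial+\Delta\lambda+\a) v$. Equating the two yields the key compatibility
\begin{equation*}
(\partial + \lambda - \b)\, h_i(\partial, \lambda) = c^i (\partial + \Delta\lambda + \a)\, \rho(\partial + \lambda).
\end{equation*}
A coboundary from a change of lift $v_\b \mapsto v_\b + p(\partial) v$, $p \in \C[\partial]$, modifies the cocycle by $\rho \mapsto \rho + (\partial - \b) p(\partial)$ and $h_i \mapsto h_i + p(\partial + \lambda) c^i (\partial + \Delta\lambda + \a)$.

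Outside the special case $\a + \b = 0$, $\Delta = 1$, Lemma~\ref{751} gives $h_i = 0$; the compatibility relation together with $c^i(\partial + \Delta\lambda + \a) \not\equiv 0$ then forces $\rho(\partial+\lambda) = 0$, so $\rho = 0$ and the extension splits, yielding ${\rm dim}\ {\rm Ext}=0$. In the remaining case $\a + \b = 0$ and $\Delta = 1$, Lemma~\ref{751} parameterizes cocycles as $(\rho, h_i) = (s, sc^i)$ with $s \in \C$. If $(s, sc^i)$ is a coboundary, the $\rho$-component requires $(\partial - \b) p(\partial) = s$ for some $p \in \C[\partial]$; evaluating at $\partial = \b$ forces $s = 0$. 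Conversely $s = 0$ is trivially a coboundary (take $p = 0$), so $s \mapsto [(s, sc^i)]$ yields an isomorphism $\C \to {\rm Ext}(\C_\b, M(\Delta,\a,c,d))$, and the dimension is one.

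The only real subtlety, once Lemma~\ref{751} is in hand, is the coboundary analysis in the nontrivial case: one must rule out coboundary equivalences between distinct values of $s$, which is achieved by the single-point evaluation at $\partial = \b$ of the polynomial identity $(\partial - \b) p(\partial) = s$.
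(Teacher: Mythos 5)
Your proposal is correct and follows essentially the same route as the paper: for $b\in\C^*$ the relation $(\partial+\lambda-\beta)l_i(\partial,\lambda)=0$ kills $l_i$, and the classification then rests entirely on Lemma~\ref{751} for $h_i(\partial,\lambda)$ and $\rho(\partial)$. The paper simply states that the theorem "follows," whereas you make explicit the compatibility identity $(\partial+\lambda-\beta)h_i=c^i(\partial+\Delta\lambda+\alpha)\rho(\partial+\lambda)$, the coboundary action $\rho\mapsto\rho+(\partial-\beta)p(\partial)$, and the evaluation at $\partial=\beta$ showing $s\neq0$ is not a coboundary — details the paper leaves implicit (or defers to \cite{W}), and which you have supplied correctly.
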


\begin{theo}
If $b=0,c\in\C^*$, one has
\begin{equation*}
{\rm dim}\ {\rm Ext}\,(\C_\b, M(\Delta, \alpha, c, d))=
\begin{cases}
1, &\ \text{if}\  \a+\b=d=0,\ \Delta=1,\\
0, &\  \text{otherwise}.
\end{cases}
\end{equation*}
\end{theo}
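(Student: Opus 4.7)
The plan is to assemble the preceding lemmas classifying $h_i$ and $l_i$, add the compatibility identity forced by $\partial v_\b$, and then verify that the one-parameter family of extensions which survives descends to a one-dimensional quotient modulo coboundaries.

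First, under the standing hypotheses $b=0$ and $c\in\C^*$ I would collect what the previous two lemmas tell us: the $h_i$-lemma gives $h_i(\partial,\lambda)=sc^i$ with $s=\rho(\partial)\in\C$ a complex constant when $\a+\b=0$ and $\Delta=1$, and $h_i=0$ otherwise, while the $l_i$-lemma (for $b=0$) gives $l_i(\partial,\lambda)\equiv0$. Applying $L_i\,{}_\lambda\cdot$ and $I_i\,{}_\lambda\cdot$ to $\partial v_\b=\b v_\b+\rho(\partial)v$ and using the $\C[\partial]$-module structure yields
\begin{equation*}
(\partial+\lambda-\b)h_i(\partial,\lambda)=\rho(\partial+\lambda)c^i(\partial+\Delta\lambda+\a), \quad (\partial+\lambda-\b)l_i(\partial,\lambda)=\rho(\partial+\lambda)dc^i;
\end{equation*}
inserting $l_i=0$ in the second identity gives $\rho(\partial+\lambda)dc^i=0$.

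Second, I would clear the trivial cases. If $d\neq0$ then $\rho=0$, and the first identity then forces $h_i=0$; every such extension is trivial. If $d=0$ but $\a+\b\neq0$ or $\Delta\neq1$, then $h_i=0$ by the $h_i$-lemma, and the first identity becomes $0=\rho(\partial+\lambda)c^i(\partial+\Delta\lambda+\a)$, forcing $\rho=0$; again all extensions are trivial, contributing $0$ to the dimension.

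Third, in the surviving case $d=0,\ \a+\b=0,\ \Delta=1$, the first identity reduces to $(\partial+\lambda-\b)sc^i=\rho(\partial+\lambda)c^i(\partial+\lambda-\b)$, pinning $\rho(\partial)$ to the constant $s$. Coboundaries come from $v_\b'=v_\b+Q(\partial)v$ with $Q\in\C[\partial]$, shifting $\rho\mapsto\rho+(\partial-\b)Q(\partial)$ and $h_i\mapsto h_i+Q(\partial+\lambda)c^i(\partial+\lambda-\b)$. Vanishing of the new $\rho$ requires $(\b-\partial)Q(\partial)=s$, which has no polynomial solution unless $s=0$; hence the space of inequivalent extensions is exactly one-dimensional, parameterized by $s\in\C$.

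The main obstacle in this argument is the coboundary analysis in the third step, since one must rule out any polynomial $Q$ simultaneously absorbing the nonzero constant $s$ in $\rho$ and the nontrivial term $sc^i$ in $h_i$. The simple degree count on $(\b-\partial)Q(\partial)=s$ already forces $s=0$, but this separation of obstructions is what makes the dimension come out to exactly one; everything else is bookkeeping from the preceding lemmas.
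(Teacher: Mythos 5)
Your proof is correct and follows essentially the same route as the paper: it assembles Lemma 7.9 ($h_i=sc^i$ only when $\a+\b=0$, $\Delta=1$) and the lemma $l_i=0$ for $b=0$, uses the compatibility with $\partial v_\b=\b v_\b+\rho(\partial)v$ to kill $\rho$ and $h_i$ when $d\neq0$ or when the $h_i$-lemma gives $h_i=0$, and is left with the one-parameter family in the case $\a+\b=d=0$, $\Delta=1$. The only difference is cosmetic: the paper derives $h_i=0$ for $d\neq 0$ from its equation for $[L_i\,{}_\lambda\,I_j]\,{}_{\lambda+\mu}v_\b$ rather than via $\rho=0$, and it delegates the coboundary/normalization argument to the cited thesis, whereas you spell it out explicitly (correctly).
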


\small


\begin{thebibliography}{9999}\vskip0pt\small
\def\re{\bibitem}\parindent=2ex\parskip=-2pt\baselineskip=-2pt
\bibitem{BKV} B. Bakalov, V. G.  Kac, A. Voronov, Cohomology of conformal algebras, {\it Comm. Math. Phys}. {\bf 200} (1999) 561--598.

\bibitem{CG} H. Chen, X. Guo, Modules over the Heisenberg-Virasoro and W(2,2) algebras, {\it e-print~arXiv}:1401.4670v2.

\bibitem{CK} S. Cheng, V. G. Kac, Conformal modules, {\it Asian J. Math}. {\bf 1} (1997) 181--193.

\bibitem{CKW} S. Cheng, V. G. Kac, M.Wakimoto, Extensions of Neveu-Schwarz modules, {\it J. Math. Phys}. {\bf 41} (2000) 2271--2294.

\bibitem{DK} A. D'Andrea, V. G. Kac, Structure theory of finite conformal algebras, {\it Sel. Math}. {\bf 4} (1998) 377--418.

\bibitem{FCH} G. Fan, Q. Chen, J. Han, A new class of infinite rank $\Z$-graded Lie conformal algebra, {\it e-print~arXiv}:1412.0074v2.

\bibitem{FSW} G. Fan, Y. Su, H. Wu, Loop Heisenberg-Virasoro Lie conformal algebra, {\it J. Math. Phys}. {\bf 55} (2014) 123508.

\bibitem{FK} D. Fattori, V. G. Kac, Classification of finite simple Lie conformal superalgebras, {\it J. Algebra}. {\bf 258} (2002) 23--59.

\bibitem{GJP} S. Gao, C. Jiang, Y. Pei, Low-dimensional conhomology of the Lie algebras $W(a,b)$, {\it Comm. Algebra}. {\bf 39} (2011) 397--423.

\bibitem{GLZ} X. Guo, R. Lu, K. Zhao, Simple Harish-Chandra modules, intermediate series modules, and Verma modules over the loop-Virasoro algebra, {\it Forum. Math}. {\bf 23} (2011) 1029--1052.

\bibitem{GXY} M. Gao, Y. Xu, X. Yue, The Lie conformal algebra of a Block type Lie algebra, {\it Algebra Colloq}. {\bf 22} (2015) 367--382.

\bibitem{H} Y. Hong, Central extensions and conformal derivations of a class of Lie conformal algebras, {\it e-print~arXiv}:1502.02770v2.

\bibitem{K1} V. G.  Kac, Formal distribution algebras and conformal algebras, (Brisbane Congress in Math. Physics, 1997).

\bibitem{K2} V. G.  Kac, {\it Infinite dimensional Lie algebras}, 3rd ed., (Cambridge University Press, Cambridge, 1990).

\bibitem{K3} V. G.  Kac, {\it Vertex algebras for beginners}, University Lecture Series Vol.10, (American Mathematical Society, 1996).

\bibitem{LJ} D. Liu, C. Jiang, Harish-Chandra modules over the twisted Heisenberg-Virasoro algebra, {\it J. Math. Phys}. {\bf 49} (2008) 012901.

\bibitem{LZ} D. Liu, L. Zhu, Generalized Heisenberg-Virasoro algebras, {\it Front. Math. China}. {\bf 4} (2009) 297--310.

\bibitem{SJ} R. Shen, C. Jiang, The derivation algebra and automorphism group of the twisted Heisenberg-Virasoro algebra, {\it Comm. Algebra}. {\bf 34} (2006) 2547--2558.

\bibitem{S} Y. Su, Low dimensional cohomology of general conformal algebras $gc_N$, {\it J. Math. Phys}. {\bf 45} (2004) 509--524.

\bibitem{SXY} Y. Su, Y. Xu,  X. Yue, Indecomposable modules of the immediate series over $W(a,b)$ algebras, {\it Science China
Math}. {\bf 27} (2014) 275--291.
\bibitem{SY1} Y. Su, L. Yuan, Schr$\ddot{\rm o}$dinger-Virasoro Lie conformal algebra, {\it J. Math. Phys}. {\bf 54} (2013) 053503.

\bibitem{SY2} Y. Su, X. Yue, Filtered Lie conformal algebras whose associated graded algebras are isomorphic to that of general
conformal algebra $gc_1$, {\it J. Algebra}. {\bf 340} (2011) 182--198.

\bibitem{TZ} X. Tang, Z. Zhang, The structures of the loop-witt algebra, {\it Acta Math. Sinica, English Series}, \textbf{28} (2012) 2329--2344.

\bibitem{W} H. Wu, Some problems on the structure and representation theories of the generalized map Virasoro algebras and the loop Virasoro Lie conformal algebra, Ph. D. Thesis, Tongji University (2014).

\bibitem{WCY} H. Wu, Q. Chen, X. Yue, Loop Virasoro Lie conformal algebra, {\it J. Math. Phys}. {\bf 55} (2014) 011706.

\bibitem{XY} Y. Xu, X. Yue, $W(a,b)$ Lie conformal algebra and its conformal module of rank one, {\it Algebra Colloq}. {\bf 22} (2015) 405--412.

\bibitem{ZD} W. Zhang, C. Dong, W-algebra W(2,2) and the vertex operator algebra $L(\frac{1}{2},0)\bigotimes L(\frac{1}{2},0)$,  {\it Comm. Math. Phys}. {\bf 285} (2009) 991--1004.

\end{thebibliography}
\end{document}